\theoremstyle{definition}
\newtheorem{theorem}{Theorem}[section]
\newtheorem{lemma}[theorem]{Lemma}
\newtheorem{proposition}[theorem]{Proposition}
\newtheorem{conjecture}{Conjecture}
\newtheorem{corollary}[theorem]{Corollary}
\newtheorem{definition}[theorem]{Definition}
\title{Monochromatic products and sums in $2$-colorings of $\mathbb{N}$}
\author{Matt Bowen\thanks{McGill University, Montréal, Quebec, Canada. \texttt{matthew.bowen2@mail.mcgill.ca} }}
\date{May 2022}
\begin{document}

\maketitle

\begin{abstract}
    We show that any $2$-coloring of $\mathbb{N}$ contains infinitely many monochromatic sets of the form $\{x,y,xy,x+y\},$ and more generally monochromatic sets of the form $\{x_i,\prod x_i,\sum x_i: i\leq k\}$ for any $k\in\mathbb{N}.$  Along the way we prove a monochromatic products of sums theorem that extends Hindman's theorem and a colorful variant of this result that holds in any 'balanced' coloring.
\end{abstract}

\section{Introduction}

The following conjecture of Hindman is well known, and several variants and relaxations of it have been the subject of recent attention in \cite{BM1, BM2, GG, GS, Mor, P}.

\begin{conjecture}\label{conjecture}
Any finite coloring of $\mathbb{N}$ contains monochromatic sets of the form $\{x,y,xy,x+y\}.$
\end{conjecture}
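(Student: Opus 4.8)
The plan is to attack the conjecture by first isolating multiplicative structure inside a single colour class and then grafting additive structure onto it, rather than trying to treat the two operations symmetrically. Fix a finite colouring $\chi\colon\mathbb{N}\to[r]$. By the multiplicative form of the Galvin--Glazer proof of Hindman's theorem, applied to the semigroup $(\mathbb{N},\times)$, some colour class $C$ contains a full multiplicative $\mathrm{IP}$-set: there is an infinite sequence $(a_n)$ so that every finite product $\prod_{n\in F}a_n$ lies in $C$. Consequently, if we choose $x$ and $y$ to be such products over disjoint index sets, then automatically $x,y\in C$ and $xy\in C$; the whole difficulty is to force $x+y$ into a single colour class at the same time.

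To create additive room inside multiplicative $\mathrm{IP}$-sets, I would first prove, as an intermediate step and as hinted by the abstract, a \emph{monochromatic products of sums} theorem extending Hindman's finite sums theorem: for any finite colouring there is an infinite sequence $(y_n)$ such that all finite sums $s_F=\sum_{n\in F}y_n$ receive one colour, while simultaneously all finite products $\prod_{F\in\mathcal{G}}s_F$ of distinct such sums receive a (possibly different) single colour. I expect the proof of this to go through idempotent ultrafilters --- choosing a multiplicative idempotent that additionally sees the additive idempotents, or running a Galvin--Glazer-style recursion in which each stage passes to a piece that is both additively large (an $\mathrm{IP}$-set, keeping running sums one colour) and multiplicatively large (keeping products of the running sums controlled).

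The endgame is then to line up the colour of the sums $s_F$, the colour of their products, and the class $C$. Here I would invoke the colourful/balanced variant advertised in the abstract: in a colouring that is not too lopsided one can upgrade ``monochromatic in some colour'' to ``monochromatic in the same colour'' by a pigeonhole over which colours occur with positive frequency among the relevant structured sets, together with a shifting argument (since $s_F$ and $s_F+y_m$ differ by a single generator, neighbouring sums cannot dodge each other's colours indefinitely). Taking $x=s_F$ and $y=s_G$ with $F,G$ disjoint then gives $x+y=s_{F\cup G}$ in the sums-colour, $xy$ in the products-colour, and $x,y$ themselves in both, and a final Ramsey-type selection over the finitely many colours pins all four values to one class; iterating yields infinitely many such configurations.

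The step I expect to be the genuine obstacle is this last alignment. With $r=2$ the relevant pigeonhole is essentially forced --- this is exactly the case the present paper resolves --- but for $r\ge 3$ the balanced/colourful products-of-sums machinery does not on its own collapse the sums-colour, the products-colour, and $C$ to a common value, and some extra input seems necessary: a quantitative density version of the products-of-sums theorem, or an iteration that trades colours for longer $\mathrm{IP}$-configurations. I would therefore expect a full proof to hinge on pinning down precisely how much colourfulness the products-of-sums argument tolerates; short of that, the method already delivers the conjecture whenever $r=2$.
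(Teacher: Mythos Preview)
First a framing point: the paper does \emph{not} prove this conjecture in full generality; it remains open for $r\ge 3$. What the paper actually proves is the $2$-colour case (its Theorem~\ref{main}), so the relevant comparison is between your sketch for $r=2$ and the paper's proof of that theorem.

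Your proposal correctly isolates the two main ingredients the paper uses: a products-of-sums extension of Hindman's theorem (the paper's Theorem~\ref{pos}/\ref{pos+}) and a colourful/balanced refinement of it (Proposition~\ref{balanced pos}, Theorem~\ref{pos++}). But your assembly of these pieces diverges from the paper's in a way that leaves a genuine gap, and you miss an entire half of the argument.

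The gap in the balanced case is this. In the products-of-sums theorem as proved here, the sums $s$ that one is allowed to multiply come from \emph{different} layers $S_1,S_2,\dots$; the theorem controls the colour of $\prod_i\big(\sum_{s\in F\cap S_i}s\big)$, but it says nothing about $s_1+s_2$ when $s_1\in FS(S_i)$ and $s_2\in FS(S_j)$ for $i\ne j$. Your plan ``take $x=s_F$, $y=s_G$ with $F,G$ disjoint, so $x+y=s_{F\cup G}$'' presupposes a single sequence $(y_n)$ with \emph{all} finite sums one colour and \emph{all} products of disjoint sums one colour, which is strictly stronger than what the products-of-sums machinery gives and is not something you prove. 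The paper avoids this by not trying to read off $x,y,xy,x+y$ directly from the products-of-sums structure at all: instead it runs Moreira's iteration (building a chain of piecewise syndetic sets $A_0,A_1,\dots$ via van der Waerden shifts and multiplications), and uses the colourful products-of-sums only to \emph{choose the step sizes} $y_i$ so that the relevant products $y_a y_{a+1}\cdots y_{b-1}$ land in the right colour. The quadruple $\{x,y,xy,x+y\}$ is then extracted from the Moreira chain, not from the $S_i$'s directly.

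The more serious omission is the thick case. Your sketch only treats colourings that are ``not too lopsided''; you never say what to do when one colour class is multiplicatively thick, and your concluding claim that ``the method already delivers the conjecture whenever $r=2$'' is unjustified for exactly this reason. In the paper the $2$-colour proof splits explicitly into the locally balanced case (handled as above) and the case where some colour is multiplicatively thick; the latter is dispatched by a short but essential elementary case analysis modelled on the $2$-colour Schur argument, using thickness to find $a_0,\dots,a_4$ with all needed products in one colour and then chasing colours of sums. Without an analogue of that argument, your proposal does not close even the $r=2$ case.
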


This question was first studied in 1977 by Graham, who showed that any $2$-coloring of $\{1,...,252\}$ contains a monochromatic configuration of this form, and by Hindman, who showed that any $2$-coloring of $\{2,...,990\}$ contains such a configuration \cite{Hc}.  However, these results were proven using brute force computation and left open the problem of determining if infinitely many monochromatic sets of this type can be found in any $2$-coloring of $\mathbb{N}$.

Our main result solves a generalization of this problem.

\begin{theorem}\label{main}
Given any $2$-coloring of $\mathbb{N}$ and $n\in\mathbb{N}$ there are arbitrarily large and distinct $x_1,...,x_n\in\mathbb{N}$ such that $$\{x_i,\prod_{j\leq i}x_j, \sum_{j=1}^n x_j: i\leq n\}$$ is monochromatic. 
\end{theorem}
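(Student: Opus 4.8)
The plan is to derive Theorem~\ref{main} from a \emph{products of sums} theorem of roughly the following shape: for every $2$-coloring of $\mathbb{N}$ there is a color $c$ and an increasing sequence $y_1<y_2<\cdots$ such that $\prod_{i=1}^{k}\bigl(\sum_{j\in F_i}y_j\bigr)$ has color $c$ for every choice of pairwise disjoint nonempty finite sets $F_1,\dots,F_k\subseteq\mathbb{N}$. The point is that once such a sequence is available the theorem is immediate: given $n$ and any target bound, choose pairwise disjoint singletons $F_i=\{j_i\}$ with $y_{j_1}<\cdots<y_{j_n}$ all large and put $x_i=y_{j_i}$. Then each $x_i=\sum_{j\in F_i}y_j$ has color $c$ (take $k=1$ with the single block $F_i$), each partial product $\prod_{j\le i}x_j=\prod_{\ell\le i}\bigl(\sum_{j\in F_\ell}y_j\bigr)$ has color $c$ (take the blocks $F_1,\dots,F_i$), and the total sum $\sum_{j\le n}x_j=\sum_{j\in F_1\cup\cdots\cup F_n}y_j$ has color $c$ (take $k=1$ with the single block $F_1\cup\cdots\cup F_n$); distinctness and the ``arbitrarily large'' condition are built into the choice of the $F_i$.

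So essentially all the work is in the products of sums theorem, and I would prove it by a dichotomy on whether the coloring is \emph{balanced}, i.e.\ whether both color classes are combinatorially rich or one of them dominates. If one color class $A$ dominates --- say it is large in a suitable multiplicative idempotent sense and remains additively rich along its own sets of finite products --- then an iterated Hindman argument should do the job: first apply Hindman's theorem multiplicatively to get a sequence all of whose finite products lie in $A$, then apply it additively inside that $FP$-set, using that $A$ is still large there, to obtain the homogeneous products-of-sums structure in color $c$. If instead the coloring is balanced, no single color is rich enough for this, and here I would prove a colorful strengthening: in a balanced coloring one can essentially prescribe the colors of the block sums and partial products appearing above, and in particular realize the constant pattern, again producing a monochromatic products-of-sums configuration. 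Combining the two cases proves the theorem.

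The main obstacle is the familiar incompatibility of addition and multiplication: producing a single sequence whose finite products and the sums sitting inside those products all receive the same color cannot be done by a one-sided Hindman argument, since after fixing a multiplicatively homogeneous $FP$-set the adversary is still free to color the sums inside it and could try to push every such sum into the opposite color. Ruling this out is exactly the content of the balanced/colorful case, and it is genuinely a two-color phenomenon --- an analogous statement for three or more colors would resolve Conjecture~\ref{conjecture}, so the proof must exploit that a set and its complement partition $\mathbb{N}$. The remaining issues --- ensuring the $x_i$ are distinct and arbitrarily large, and checking that the block-sum reduction is compatible with the precise form of the products-of-sums theorem one actually proves --- are routine bookkeeping by comparison.
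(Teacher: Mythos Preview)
Your reduction is fine, but the intermediate ``products of sums'' theorem you are aiming for is too strong: it would solve a well--known open problem. Taking $k=1$ in your statement forces $FS(y_i)\subseteq c$, and taking all the $F_i$ to be singletons forces $FP(y_i)\subseteq c$; so your theorem would produce, in every $2$--coloring of $\mathbb{N}$, an infinite sequence with $FS(y_i)\cup FP(y_i)$ monochromatic. That is Hindman's combined sums--and--products problem, open even for two colors, and certainly not a consequence of the iterated Hindman argument you sketch for the ``dominating'' case. Concretely, if you first pass to a multiplicative $FP$--set and then extract an additive $FS$--sequence $(y_i)$ inside it, there is no reason for a mixed expression like $(y_1+y_2)\,y_3$ to land back in the $FP$--set, so the structure does not close up.

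The paper's products--of--sums theorem (Theorem~\ref{pos}) avoids this by fixing the block structure in advance: one is handed finite sets $S_1,S_2,\dots$ and may only sum within a single $S_i$ and multiply across distinct $S_i$'s. That version is provable by a Galvin--Glazer style argument, but it is not strong enough to read off Theorem~\ref{main} directly the way you propose, because you cannot re--partition the $y_j$'s at will. What the paper actually does is a genuine two--step argument rather than a single master theorem. In the locally balanced case it runs Moreira's inductive construction of the configuration $\{\prod_{j\le i}x_j+\sum_{j>i}c_jx_j\}$ and uses the colorful products--of--sums Theorem~\ref{pos++} only to \emph{choose the step sizes $y_i$} so that the relevant products $\prod y_i$ receive the right color; this yields Theorem~\ref{thm:local}. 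In the remaining case one color class is multiplicatively thick, and the configuration is found by a direct finitary case analysis modeled on the $2$--color Schur argument. So the dichotomy you have in mind is the right one, but neither branch goes through an unrestricted products--of--sums sequence; you should aim lower in the intermediate statement and do more work in assembling the final configuration.
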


Our methods can also be used to prove other results of a similar flavor.  For example, in \cite{GG} Goldoni and Goldoni showed that any $2$-coloring of $\{1,...,44\}$ contains (not necessarily distinct) $x,y$ such that $\{x,y,xy,x+2y\}$ is monochromatic.  We prove the following.

\begin{theorem}\label{m2}
Given any $2$-coloring of $\mathbb{N}$ and $n\in\mathbb{N}$ there are monochromatic sets of the form $\{x,y,xy,x+ny\}.$ 
\end{theorem}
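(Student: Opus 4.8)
The plan is to rerun the argument behind Theorem~\ref{main}, with the weighted sum $x+ny$ taking over the role played there by the balanced sum $\sum_{j=1}^n x_j$; since no distinctness is demanded here, this should if anything be more flexible. First I would fix a $2$-coloring $c$ and reduce to producing a single pair $x,y\in\mathbb{N}$ with $c(x)=c(y)=c(xy)=c(x+ny)$. The engine is the monochromatic products of sums theorem (announced in the abstract) together with its colorful variant for balanced colorings: applied to $c$ it yields an increasing sequence $b_1<b_2<\cdots$ and a color $\chi$ such that every number of the form $\prod_{i\in F}\bigl(\sum_{j\in E_i}b_j\bigr)$, with $F$ finite and the $E_i$ pairwise disjoint finite nonempty index blocks, receives color $\chi$ (in the balanced regime one instead gets the colorful conclusion, handled at the end).

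Granting such a sequence, I would take $y$ to be a single sum-block $\sum_{j\in E_y}b_j$ and $x$ to be a product of further disjoint sum-blocks, so that $xy$ is again such a product and is therefore also colored $\chi$; the one new point compared with Theorem~\ref{main} is that $x+ny$ must also be kept inside this family. To force this I would build the sequence with a short reserved initial segment satisfying a multiplicative relation, the simplest being $b_1=n\,b_2$: choosing $y=b_2$ makes $ny=b_1$, and choosing the blocks defining $x$ to avoid the indices $1$ and $2$ makes $x+ny$ equal to the sum-block obtained by adjoining the index $1$ to the block(s) defining $x$, which is therefore $\chi$ as well. The verification that all four of $x,y,xy,x+ny$ then carry color $\chi$ is immediate from the multiplicative-and-additive closure of the products of sums family. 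When $c$ is balanced the same setup is run against the colorful version of the theorem: the naive choices above may push $x+ny$ into the opposite color, and here I would close the argument with a short case analysis on the colors of the relevant sum-blocks and of their products, exactly mirroring the balanced case of Theorem~\ref{main} but replacing each occurrence of $\sum_j x_j$ by $x+ny$ and exploiting the freedom that $x$ and $y$ need not be distinct.

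The main obstacle I expect is the construction step itself: one must check that the inductive (respectively ultrafilter-theoretic) construction underlying the products of sums theorem can absorb the extra constraint $b_1=n\,b_2$ on its reserved elements without spoiling monochromaticity — equivalently, that $n\,b_2$ can always be arranged to be a legitimate member of the monochromatic products of sums family (for instance by fixing $b_2$ first inside the relevant large set and then realising $n\,b_2$ as a sum of later, suitably chosen sequence elements). Once this ``$ny$ fits as a sum-block'' issue is resolved, everything else is bookkeeping. A secondary obstacle is pushing the balanced-case casework through with a weighted sum rather than $x+y$; I expect this to be routine, but it is the one place where passing from the conjectural configuration $\{x,y,xy,x+y\}$ to $\{x,y,xy,x+ny\}$ genuinely has to be paid for.
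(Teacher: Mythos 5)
Your proposal takes a genuinely different route from the paper, and I believe it has a gap at exactly the point you flag as the ``main obstacle.''

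The paper splits into two cases. If the coloring is locally balanced, Theorem~\ref{thm:local} (which is Moreira's Theorem~\ref{M} with controlled step sizes, proved by combining Theorem~\ref{pos++} with the additive van~der~Waerden argument on piecewise syndetic sets) immediately produces $\{x,y,xy,x+ny\}$ by taking two variables and coefficient $c_2=n$. If not, one color class $A$ is multiplicatively thick; the paper then builds a short finite sequence $n<a_0,\dots,a_4$ with $[n^2a_0^2\cdots a_i^2]a_{i+1}\subset A$ and finishes with a six-step Schur-style case analysis. Crucially, in neither branch does the paper try to place $x+ny$ inside the products-of-sums family: the additive structure comes from van~der~Waerden applied to piecewise syndetic sets (balanced case) or from direct casework in a thick set (thick case). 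Your plan, by contrast, tries to manufacture $x+ny$ as literally another element of the monochromatic products-of-sums family, and this is where it breaks.

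Concretely, the constraint $b_1=n\,b_2$ with $b_1\in S_1$, $b_2\in S_2$ is not achievable in the Galvin--Glazer construction underlying Theorem~\ref{pos+}/\ref{pos++}. The elements of $S_1$ are chosen from a set $A^*$ in the idempotent $e$, and the elements of $S_2$ from a shrunken set $B^*\in e$ depending on the choice of $S_1$. To have $b_1=n b_2$ you would need either $b_1/n\in B^*$ (a divisibility constraint that the ultrafilter machinery gives you no control over; note that $B\in e$ does not imply $n^{-1}B\in e$ or $nB\in e$ for a multiplicative idempotent $e$, and Lemma~\ref{lem:ps}(2) only preserves piecewise syndeticity, not membership in a fixed ultrafilter) or $n b_2\in A^*$ with $b_2$ chosen first, which faces the same problem and is moreover circular because $B^*$ is defined in terms of $S_1\ni b_1$. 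Your fallback, realising $n b_2$ as a sum of \emph{later} sequence elements, is even less tractable: later elements of the $S_i$ come from successively thinner ultrafilter sets and grow rapidly, so forcing them to sum to a fixed earlier target $n b_2$ is not something the construction supports. You correctly identify this as ``the main obstacle'' but do not resolve it, and I do not see a way to resolve it within the products-of-sums framework alone; this is precisely why the paper reaches for additive van~der~Waerden in the balanced case and for a bare-hands thick-set argument otherwise. A secondary point: your ``generic vs.\ balanced'' split is backwards relative to the paper's ``balanced vs.\ thick'' split, and the thick case --- the one the paper treats by finite casework --- is not engaged with at all in your outline.
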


\subsection{Techniques}

The approach in this paper builds off of recent work of Moreira, who proved the following result in \cite{Mor}.

\begin{theorem}[Moreira's Theorem]\label{M}
For any $n,k\in\mathbb{N}$ and finite coloring of $\mathbb{N},$ there is a monochromatic set of the form $$\{\prod_{j\leq i}x_j+\sum_{i<j\leq n}c_jx_j: i\leq n, \textnormal{ } 0\leq c_j\leq k\}.$$
\end{theorem}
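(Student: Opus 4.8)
The plan is to deduce the theorem from a "relative" combined Ramsey statement living inside a single, carefully chosen color class, and then to build the configuration by a simultaneous recursion that alternately exploits multiplicative and additive structure. First I would combine the pigeonhole principle with the algebra of $\beta\mathbb{N}$: some color class $C=c^{-1}(i_0)$ lies in a minimal idempotent $p$ of the semigroup $(\beta\mathbb{N},\cdot)$, and, using known interactions between the additive and multiplicative structures on $\beta\mathbb{N}$ (e.g.\ that the smallest multiplicative ideal meets additively central sets), one can arrange that $C$ is simultaneously multiplicatively central and additively central, with enough uniformity to run van der Waerden-type arguments inside it. All of the configuration will be found inside $C$.

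Next I would build $x_1,\dots,x_n$ in $n$ stages. At stage $i$ I extend the multiplicative product chain: having fixed $x_1,\dots,x_{i-1}$ (hence the partial product $P_{i-1}=\prod_{j<i}x_j$), I choose $x_i$ to be a multiple of $P_{i-1}$ drawn from a suitable member of $p$, so that the multiplicative central sets theorem keeps $P_i=\prod_{j\le i}x_j\in C$ and leaves room to continue. The point of taking $x_i$ divisible by $P_{i-1}$ is that then every later generator is divisible by $P_{i-1}$, so each element $P_i+\sum_{i<j\le n}c_jx_j$ of the target configuration equals $P_i$ times $1$ plus a bounded linear form in the later generators; hence at stage $i$ the finitely many configuration-elements that get "completed" amount to requiring $P_i\cdot m\in C$ for $m$ ranging over a finite $\{0,\dots,k\}$-combinatorial cube. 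I ensure this by applying the Gallai--Witt (multidimensional van der Waerden) theorem to the auxiliary finite coloring $m\mapsto(c(P_\ell\, m))_{\ell\le i}$ of $\mathbb{N}$, which, because $C$ is additively central, can be made to land the whole cube in $C$ for every relevant dilate at once. Tracking the coefficient range $0\le c_j\le k$ throughout (it only dictates the side length of the cubes) gives the bounded-coefficient version for free.

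The main obstacle -- and the technical heart -- is the interplay between these two recursions: the multiplicative step wants the freedom to pick generators deep inside the idempotent $p$, the additive step wants to pin a prescribed van der Waerden pattern inside $C$, and moreover the linear forms appearing at stage $i$ have coefficients depending on the earlier generators, so the auxiliary coloring fed to Gallai--Witt changes from stage to stage. Reconciling these -- essentially, showing that the color class can be taken simultaneously multiplicatively and additively rich with enough uniformity that neither recursion destroys the other's structure -- is exactly where the topological-dynamics machinery (minimal idempotents, the structure of the smallest ideal, the central sets theorem, or equivalently a recurrence theorem for a dynamical system carrying coupled additive and dilation actions) earns its keep. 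I expect everything else -- the pigeonhole reduction, the bookkeeping of which cube is placed at which stage, and the final assembly of the configuration -- to be routine once this compatibility is in place.
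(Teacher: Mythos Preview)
The paper does not give a self-contained proof of this statement: it is quoted as Moreira's theorem from \cite{Mor}. What the paper does contain is a description, embedded in the proof of Theorem~\ref{thm:local}, of how Moreira's argument goes. That argument is structurally quite different from your plan. Moreira does \emph{not} fix a single color class $C$ at the outset and try to keep everything inside it. Instead he builds a chain of additively piecewise syndetic sets $A_0\supseteq y_0^{-1}A_1\supseteq\dots$ by alternating (i) an IP$^*_r$ van der Waerden step, which produces a step size $y_m$ with $A_m\cap\bigcap_f(A_m-fy_m)$ still piecewise syndetic, and (ii) a dilation step, multiplying by $y_m$ and then intersecting with \emph{whichever} color class keeps the result piecewise syndetic. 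Crucially, the color index $t_m$ is allowed to change from step to step; only at the end does a pigeonhole argument find $n+1$ indices with the same $t_m$, and those indices furnish the $x_j$.

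Your proposal tries to do the harder thing of staying in one $C$ throughout, and this is where the gap is. Knowing that some cell is simultaneously additively and multiplicatively central is true but not enough: at stage $i$ you need the dilate $P_i^{-1}C$ to contain a prescribed affine cube, and Gallai--Witt applied to the auxiliary coloring $m\mapsto (c(P_\ell m))_{\ell\le i}$ only yields a cube monochromatic for \emph{some} value of that tuple, not for the particular tuple $(i_0,\dots,i_0)$ naming $C$. Additive centrality of $C$ does not force this, because dilating by $P_i$ need not preserve additive largeness. The ``compatibility'' you flag in your last paragraph as the technical heart is in fact the whole problem, and Moreira's solution is precisely to abandon the single-color constraint during the recursion and recover it afterwards by pigeonhole. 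Your divisibility trick (taking $x_i$ divisible by $P_{i-1}$) is also not how the argument runs; the reduction to dilated cubes is handled instead by the nested structure $A_{m+1}\subset y_mA_m$ together with the long van der Waerden progressions already built into $A_m$.
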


Notice that Theorem \ref{M} would imply Theorems \ref{main} and \ref{m2} if we could ensure that the 'step sizes' $\{x_2,...,x_n\}$ were also the right color.  Our main idea is that this is possible to arrange in any sufficiently 'balanced' coloring of the naturals. To achieve this, we will work with the following generalization of Hindman's theorem \cite{Hind}.

\begin{theorem}\label{pos}
For any finite coloring of $\mathbb{N}$ and any function $f:\mathbb{N}\rightarrow\mathbb{N}$ there is a color class $C$ and sequence of sets $S_1,S_2...\subset \mathbb{N}$ with $|S_i|=f(i)$ such that for any finite $F\subset \bigcup_i S_i$ we have $$\prod_{i: S_i\cap F\neq \emptyset}\sum_{s\in S_i\cap F} s\in C.$$
\end{theorem}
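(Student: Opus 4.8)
The plan is to run a Galvin--Glazer-style idempotent-ultrafilter argument in the multiplicative semigroup $(\beta\mathbb{N},\cdot)$, but with an idempotent chosen so that its members also carry additive structure. Call a set $A\subseteq\mathbb{N}$ \emph{$FS$-rich} if for every $m\in\mathbb{N}$ there are distinct $y_1,\dots,y_m$ with $FS(y_1,\dots,y_m):=\{\sum_{i\in I}y_i:\ I\neq\emptyset,\ I\subseteq\{1,\dots,m\}\}\subseteq A$. By Hindman's theorem every member of an additive idempotent is $FS$-rich, so the set $\mathcal{P}$ of ultrafilters all of whose members are $FS$-rich is nonempty, and it is plainly closed in $\beta\mathbb{N}$. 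The first step is to observe that $\mathcal{P}$ is a subsemigroup of $(\beta\mathbb{N},\cdot)$: writing $n^{-1}A:=\{m:nm\in A\}$, if $A$ is not $FS$-rich then $n^{-1}A$ is not $FS$-rich for any $n$, since $n\cdot FS(y_1,\dots,y_m)=FS(ny_1,\dots,ny_m)$; hence for $q\in\mathcal{P}$ the set $\{n:n^{-1}A\in q\}$ is empty, so $A\notin p\cdot q$ for any $p$. By the Ellis--Numakura lemma $\mathcal{P}$ contains an idempotent $p=p\cdot p$, necessarily nonprincipal since $\{1\}$ is not $FS$-rich.

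Now fix the finite coloring and let $C$ be a color class with $C\in p$. Recall that for $A\in p$ the set $A^{\star}:=\{n\in A:n^{-1}A\in p\}$ again lies in $p$, and that $n^{-1}(A^{\star})\in p$ whenever $n\in A^{\star}$. I would construct the blocks $S_i$ by recursion, maintaining a decreasing chain $C^{\star}=A_0\supseteq A_1\supseteq\cdots$ of members of $p$ together with two invariants: $(\dagger_k)$ $\prod_{i\leq k,\ F\cap S_i\neq\emptyset}\sum_{s\in F\cap S_i}s\in C$ for every nonempty finite $F\subseteq S_1\cup\cdots\cup S_k$; and $(\star_k)$ $a\cdot\prod_{i\leq k,\ F\cap S_i\neq\emptyset}\sum_{s\in F\cap S_i}s\in C$ for every finite $F\subseteq S_1\cup\cdots\cup S_k$ and every $a\in A_k$. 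Both hold vacuously at $k=0$ (the empty product being $1$ and $A_0\subseteq C$). For the inductive step, since $A_k^{\star}\in p\subseteq\mathcal{P}$ is $FS$-rich I can pick $S_{k+1}$ of size $f(k+1)$ all of whose nonempty subset sums lie in $A_k^{\star}$; I then set $A_{k+1}=A_k^{\star}\cap\bigcap_{\sigma}\sigma^{-1}(A_k^{\star})$, the intersection over the finitely many subset sums $\sigma$ of $S_{k+1}$, which lies in $p$ since each such $\sigma$ lies in $A_k^{\star}$, and which is contained in $A_0\subseteq C$. Given a relevant $F$, write $T=F\cap S_{k+1}$, $\sigma=\sum_{t\in T}t$, and let $\pi$ be the product over $F\cap(S_1\cup\cdots\cup S_k)$; then $(\dagger_{k+1})$ follows from $\sigma\in A_k\subseteq C$ together with $(\star_k)$ applied to the point $\sigma$, and $(\star_{k+1})$ follows because $a\in A_{k+1}$ forces $a\sigma\in A_k^{\star}\subseteq A_k$, so $(\star_k)$ applies to the point $a\sigma$. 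Once all $S_i$ are built, any finite $F\subseteq\bigcup_i S_i$ lies in $S_1\cup\cdots\cup S_k$ for some $k$, and $(\dagger_k)$ is precisely the desired conclusion.

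I expect the recursion to be routine bookkeeping and the real point to be the choice of $p$. A generic multiplicative idempotent is useless here: it may concentrate on the odd numbers, or on a nonzero residue class modulo $3$, neither of which contains a two-element set closed under addition, so there would be no way to form the within-block sums at all. The crux is therefore the (easy but essential) fact that one may insist the members of $p$ be additively $FS$-rich, i.e. that $\mathcal{P}$ is a nonempty closed $\cdot$-subsemigroup; the semigroup check rests entirely on the scaling identity $n\cdot FS(y_1,\dots,y_m)=FS(ny_1,\dots,ny_m)$. The only other thing to watch is that at each stage only finitely many subset sums $\sigma$ appear, so that $A_{k+1}$ stays in $p$; this is immediate because each block $S_{k+1}$ is finite.
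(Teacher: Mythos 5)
Your proof is correct, and it takes a genuinely different route to the crucial ingredient than the paper does. Both arguments run the Galvin--Glazer recursion in $(\beta\mathbb{N},\cdot)$ with an idempotent $p$ whose members admit arbitrarily long $FS$-structures; the recursion itself (your invariants $(\dagger_k)$ and $(\star_k)$, the choice $A_{k+1}=A_k^{\star}\cap\bigcap_{\sigma\in FS(S_{k+1})}\sigma^{-1}(A_k^{\star})$, and the case split on whether $F\cap S_{k+1}$ is empty) matches the paper's construction in Theorem \ref{pos+} essentially verbatim. Where you diverge is in how you obtain the needed property of $p$. The paper takes $p$ to be a \emph{minimal} multiplicative idempotent, so each $A\in p$ is multiplicatively central, hence multiplicatively piecewise syndetic, and then invokes Lemma \ref{lem:ps}(3) (ultimately HS Theorem 5.19.2 on the additive--multiplicative interplay) to conclude that such $A$ contain arbitrarily long $FS$-sequences. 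You instead build the $FS$-richness in by hand: you observe that $\mathcal{P}=\{q:\text{every }A\in q\text{ is }FS\text{-rich}\}$ is nonempty (by Hindman's theorem, via additive idempotents), closed, and---because $n\cdot FS(y_1,\dots,y_m)=FS(ny_1,\dots,ny_m)$---a left ideal of $(\beta\mathbb{N},\cdot)$, so Ellis--Numakura hands you an idempotent inside it. Your route is more self-contained and avoids the piecewise-syndeticity machinery entirely, which is a genuine simplification for Theorem \ref{pos}; the trade-off is that it does not directly give the paper's stronger Theorem \ref{pos+} (the statement for an arbitrary multiplicatively central set $A$), which the paper reuses in the proofs of Theorem \ref{pos++} and the balanced-coloring results. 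Incidentally, since $\mathcal{P}$ is in fact a closed \emph{left ideal}, you could have insisted on a minimal idempotent in $\mathcal{P}$ and thereby recovered the full strength of Theorem \ref{pos+}, but for the statement you were proving your weaker choice suffices.
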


When we apply the above theorem to the constant function $f(n)=1$ we recover Hindman's finite products theorem, and when we apply it to the function $f(1)=n$ and $f(i)=0$ otherwise we recover the finite sums theorem of Folkman-Rado-Sanders.  As we will see, the step sizes $x_i$ in Moreira's Theorem \ref{M} are closely related to the other configurations that we can obtain from Theorem \ref{pos}, which we will leverage to prove our main results in the case that the colorings are sufficiently 'balanced.'

More precisely, recall that $S\subseteq \mathbb{N}$ is \textbf{multiplicatively syndetic} if there is a finite set $F\subset \mathbb{N}$ such that $\mathbb{N}=S/F,$ and that $T\subseteq \mathbb{N}$ is \textbf{multiplicatively thick} if its compliment is not syndetic, or equivalently if for each finite $F\subset \mathbb{N}$ there is a $a\in T$ with $aF\subset T$.  We will call a $2$-coloring $\mathbb{N}=C_0\cup C_1$ \textbf{locally balanced} if there is a thick set $T$ and syndetic sets $S_0,S_1$ with $S_i\cap T\subseteq C_i.$  In this setting we may prove a colorful extension of Theorem \ref{pos}.

\begin{proposition}\label{balanced pos}
Suppose that $\mathbb{N}=C_0\cup C_1$ is a locally balanced coloring.  There is an $i\in\{0,1\}$  such that for any $f:\mathbb{N}\rightarrow\mathbb{N}$ there is an arbitrarily large finite $S_0\subset \mathbb{N}$ and a sequence of sets $S_1,S_2...\subset \mathbb{N}$ with $|S_j|=f(j)$ such that for any finite $F\subset \bigcup_j S_j$ we have $$\prod_{j: S_j\cap F\neq \emptyset}\sum_{s\in S_j\cap F} s $$

is in $C_i$ if $S_0\cap F\neq \emptyset$ and is in $C_{1-i}$ otherwise.

\end{proposition}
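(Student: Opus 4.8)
The plan is to extract from the balanced structure a single multiplier $d$ that ``flips'' the color of every element of some large reservoir set $B\subseteq C_{1-i}$ into $C_i$, and then to build the required configuration by dropping a product-of-sums configuration into $B$ (via a localized form of Theorem~\ref{pos}) and dilating its distinguished block by $d$.

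\emph{Step 1: the flipping multiplier.} Since $T$ is multiplicatively thick, its closure $\overline{T}$ in $\beta\mathbb{N}$ contains a minimal left ideal $L$ of $(\beta\mathbb{N},\cdot)$, and I fix an idempotent $p\in L$. As $L$ is a left ideal containing $p$, for every $d\in\mathbb{N}$ the product of the principal ultrafilter at $d$ with $p$ lies in $\beta\mathbb{N}\cdot p\subseteq L\subseteq\overline{T}$; hence $T$ belongs to that product, which unwinds to $\{n:dn\in T\}\in p$, i.e.\ $T/d\in p$ for all $d$. Writing $\mathbb{N}=A_i/F_i$ with $F_i$ finite for $i\in\{0,1\}$ (here I rename the two syndetic sets $A_0,A_1$ to avoid a clash with the blocks below), the ultrafilter $p$ must contain some $A_i/d_i$ with $d_i\in F_i$. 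Then
\[
B_0:=\bigcap_{i\in\{0,1\}}\bigl((T/d_i)\cap(A_i/d_i)\bigr)\in p,
\]
and every $n\in B_0$ satisfies $nd_i\in T\cap A_i\subseteq C_i$ for $i=0,1$. Partitioning $B_0$ by the color of $n$ itself, one piece $B:=B_0\cap C_j$ lies in $p$; set $i:=1-j$ and $d:=d_{1-j}$. Then every $n\in B$ lies in $C_{1-i}$ while $dn\in C_i$.

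\emph{Step 2: assembling the configuration.} I would then apply the following localized form of Theorem~\ref{pos}: for any $g:\mathbb{N}\to\mathbb{N}$ there are pairwise disjoint sets $V_1,V_2,\dots$ with $|V_m|=g(m)$ such that $\prod_{m:V_m\cap F\neq\emptyset}\sum_{s\in V_m\cap F}s\in B$ for every finite nonempty $F\subseteq\bigcup_m V_m$. Granting this, given $f$ and a target size $M$, apply it with $g(1):=M$ and $g(m+1):=f(m)$, and set $S_0:=d\cdot V_1$ and $S_m:=V_{m+1}$ for $m\geq1$, so $S_0$ is as large as we please and $|S_m|=f(m)$. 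If a finite $F\subseteq\bigcup_j S_j$ misses $S_0$, then $\prod_{j:S_j\cap F\neq\emptyset}\sum_{s\in S_j\cap F}s$ is a product-of-sums over the $V_m$ not involving $V_1$, hence lies in $B\subseteq C_{1-i}$. If $F$ meets $S_0$, dividing its $S_0$-entries by $d$ rewrites the same quantity as $d\cdot v$, where $v$ is a product-of-sums over the $V_m$ that does involve $V_1$ (a single subset sum of $V_1$ when $F\subseteq S_0$); so $v\in B$ and $dv\in C_i$. This is exactly the conclusion of the Proposition.

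\emph{Main obstacle.} The whole argument reduces to the localized Theorem~\ref{pos} used in Step~2 — forcing a product-of-sums configuration to lie inside the prescribed $p$-large set $B$. Because $\overline{T}$ is a closed subsemigroup of $(\beta\mathbb{N},\cdot)$, the natural route is to re-run the proof of Theorem~\ref{pos} with its idempotent chosen inside $\overline{T}$ (so that $T$, and after shrinking $B$, is one of its members) and then pass to the sub-configuration landing in $B$. The delicate point is that such an idempotent must still carry the additive structure used to produce the individual blocks $V_m$, whereas a priori a multiplicative idempotent sitting inside $\overline{T}$ need not have members containing long finite sumsets; reconciling this — or, equivalently, carrying out the recursion behind Theorem~\ref{pos} directly inside multiplicative dilates $aT$ of the thick set while using the syndeticity of the $A_i$ to control colors at each stage — is where the real work is.
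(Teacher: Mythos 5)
Your proposal is correct and takes a genuinely different route from the paper. The paper passes through Theorem~\ref{pos++}: from the minimal left ideal $L\subseteq\overline{T}$ one takes an idempotent $e\in L$ and a second point $p\in L$, places $C_{1-i}\in e$ (thickness gives $T\in e$, hence one of the two colors) and $C_i\in p$ (syndeticity of $M_i$ gives some $p\in L$ with $M_i\cap T\subseteq C_i$ in $p$), and then uses the identity $p\cdot e=p$ to choose $S_0$ from a $p$-large set before running the product-of-sums recursion inside $e$. You instead work with a single minimal idempotent $p\in L$, extract an explicit multiplier $d$ that flips the color of a $p$-large reservoir $B\subseteq C_{1-i}$ into $C_i$, build the entire configuration inside $B$, and dilate the distinguished block by $d$. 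Your Step 1 is correct: the computation $T/d\in p$ via $\beta\mathbb{N}\cdot p=L\subseteq\overline{T}$ is right, as is pinning down $d_0,d_1$ from syndeticity. The ``main obstacle'' you flag in Step 2 is in fact no obstacle at all: $B$ is a member of the minimal idempotent $p$, hence multiplicatively central, and the paper's Theorem~\ref{pos+} is exactly the localized product-of-sums result you want. Your worry that a multiplicative idempotent in $\overline{T}$ ``need not have members containing long finite sumsets'' is dispelled by Lemma~\ref{lem:ps}(3), which gives arbitrarily long finite-sum subsets inside any multiplicatively piecewise syndetic set (a consequence of Folkman--Rado--Sanders plus dilation invariance); this is precisely what powers the Galvin--Glazer recursion in Theorem~\ref{pos+}. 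One small bookkeeping point to add: after dilating $V_1$ by $d$ you should arrange $d\cdot V_1$ disjoint from $\bigcup_{m\geq 2}V_m$ (easy, since each $V_m$ can be taken with arbitrarily large minimum), so that a finite $F\subseteq\bigcup_j S_j$ decomposes unambiguously into its $S_0$- and non-$S_0$-parts. The paper's two-ultrafilter formulation is slightly more abstract but is reused verbatim in the proof of Theorem~\ref{thm:local}, saving work downstream; your route is more concrete and isolates the ``color-flipping multiplier'' $d$ as the mechanism, which is a nice reformulation.
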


This proposition was inspired by other colorful Ramsey theorems for graphs and tournaments where each color class is sufficiently well represented; see \cite{BHMM,BLM,CHM,CM, GN,FS} for several recent results of a similar flavor.  By using Proposition \ref{balanced pos} together with the proof of Theorem \ref{M} we obtain the following:

\begin{theorem}\label{thm:local}
Any locally balanced $2$-coloring of $\mathbb{N}$ contains a monochromatic set of the form $$\{x_i,\prod_{j\leq i}x_i+\sum_{j<i\leq n}c_jx_j: i\leq n, \textnormal{ } 0\leq c_j\leq k\}.$$
\end{theorem}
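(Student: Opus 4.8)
The plan is to combine Proposition~\ref{balanced pos} with a suitable adaptation of Moreira's argument for Theorem~\ref{M}, exploiting the fact that the ``colorful'' conclusion of Proposition~\ref{balanced pos} lets us control the color of the step sizes $x_i$ in addition to the products-of-sums that appear inside Moreira's configuration. Recall that in Moreira's proof one extracts the numbers $x_1,x_2,\dots,x_n$ recursively so that all the quantities $\prod_{j\le i}x_j + \sum_{i<j\le n} c_j x_j$ lie in a single color class; the key point is that these quantities, and in particular the ``partial products'' $\prod_{j\le i}x_j$ together with the shifts by $\sum c_j x_j$, are exactly the kind of products-of-sums expressions that Theorem~\ref{pos} (and its colorful refinement) produces. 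So the first step is to write $n$ and the bound $k$ as input and feed Proposition~\ref{balanced pos} the right function $f$, choosing $f(1)$ large (it will need to be large enough to run Moreira's finitary recursion, which consumes a bounded-but-growing number of elements from the first block to build one $x_i$), and $f(2),f(3),\dots$ small (of size one, or whatever Moreira's induction needs at later stages).

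The second step is to re-run Moreira's recursion \emph{inside} the structure handed back by Proposition~\ref{balanced pos}. Here one chooses $x_1$ to be a sum $\sum_{s\in S_1\cap F}s$ of elements of the first block $S_0$ (in the proposition's notation, the distinguished large set), so that $x_1\in C_{1-i}$; then $\prod_{j\le 1}x_j = x_1$ also lies in $C_{1-i}$. For the remaining configuration elements one arranges the finite sets $F$ so that they always meet $S_0$, forcing $\prod_{j:S_j\cap F\ne\emptyset}\sum_{s\in S_j\cap F}s \in C_i$. The point is that every element of the desired monochromatic set is either a step size $x_i$ (which we route to $C_{1-i}$ by taking $F\cap S_0=\emptyset$, i.e.\ $x_i$ built purely from a later block $S_j$, $j\ge 2$) or a ``head'' term $\prod_{j\le i}x_j + \sum_{i<j\le n}c_j x_j$ (which we route to $C_i$ by always including a fixed contribution from $S_0$ in the corresponding $F$). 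Wait --- this needs care: the product-of-sums in Proposition~\ref{balanced pos} is literally $\prod(\text{block sums})$, which is not obviously of the additive-shift form $\prod x_j + \sum c_j x_j$; one recovers the latter not from a single evaluation of the product-of-sums functional but, as in Moreira's proof, by a telescoping/IP-set argument that combines several such evaluations. So the real content of the second step is to check that Moreira's combinatorial lemma --- that from an IP-type product-of-sums system one can extract $x_1,\dots,x_n$ with $\{\prod_{j\le i}x_j+\sum c_j x_j\}$ monochromatic --- goes through verbatim when the underlying system is the one produced by Proposition~\ref{balanced pos}, and additionally delivers the $x_i$ in the opposite color.

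The third step is bookkeeping to confirm the two-color split is consistent: the partial products $\prod_{j\le i}x_j$ for $i\ge 1$ all involve $x_1$, hence a contribution from $S_0$, hence land in $C_i$ along with their shifts; meanwhile each individual $x_i$ with $i\ge 2$ is built from a block $S_j$ disjoint from $S_0$, hence lands in $C_{1-i}$; and $x_1$ itself we deliberately take from $S_0$, but then $x_1 = \prod_{j\le 1}x_j$ is simultaneously a ``head'' term, so there is no clash --- it just needs to be assigned to one of the two color classes consistently, which we can do since the proposition gives us freedom in how the blocks generating $x_1$ interact with $S_0$. (One may need to set up the indexing so that $x_1$ is treated as a head term and $x_2,\dots,x_n$ as step sizes, matching the set $\{x_i : i\le n\}\cup\{\text{heads}\}$ in the statement with a slight reinterpretation, or alternatively take $x_1$ from a later block and absorb the $i=1$ head term separately.) Finally, since Proposition~\ref{balanced pos} allows $S_0$ (hence $x_1$, hence the whole configuration) to be taken arbitrarily large, and since distinctness can be arranged by passing to a further sub-system, we get the full conclusion.

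The main obstacle I expect is the second step: verifying that Moreira's extraction argument --- originally run inside an ultrafilter or over an IP-system built from a single color class --- survives when we instead work inside the two-colored product-of-sums system of Proposition~\ref{balanced pos}, and in particular that the step sizes $x_i$, which in Moreira's proof are byproducts with no color control, can here be forced into $C_{1-i}$ without destroying the control on the head terms. Concretely, the danger is that the $F$'s needed to realize a given step size $x_i$ (disjoint from $S_0$) and the $F$'s needed to realize the head terms (meeting $S_0$) must come from a common nested family, and one has to check Moreira's recursion can be carried out with that extra constraint; I expect this works because the constraint ``$F\cap S_0\ne\emptyset$ vs.\ $F\cap S_0=\emptyset$'' is exactly the dichotomy built into the conclusion of Proposition~\ref{balanced pos}, so it is compatible by design, but making this precise is where the real work lies.
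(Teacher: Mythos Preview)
There is a genuine gap. Your plan routes the step sizes $x_i$ (for $i\ge 2$) to $C_{1-i}$ and the ``head'' terms $\prod_{j\le i}x_j+\sum c_jx_j$ to $C_i$; but the theorem asks for the \emph{union} $\{x_i\}\cup\{\text{heads}\}$ to be monochromatic, so splitting them across two colors is exactly what you must avoid. More fundamentally, the head terms are not products of sums: an expression like $x_1x_2+c_3x_3$ cannot be realized as $\prod_j(\sum_{s\in S_j\cap F}s)$ for any choice of $F$, so the dichotomy ``$F\cap S_0\neq\emptyset$ vs.\ $F\cap S_0=\emptyset$'' from Proposition~\ref{balanced pos} simply does not apply to them. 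You notice this (``Wait --- this needs care'') but then appeal to ``Moreira's combinatorial lemma'' as if it were an algebraic extraction from an IP/product-of-sums system; it is not.

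The paper's proof works quite differently. Moreira's argument is an \emph{iteration of van der Waerden on piecewise syndetic sets}: one builds a chain $A_0\supset y_0^{-1}A_1\supset\cdots$ where each $A_{m+1}$ is obtained from $A_m$ by intersecting with shifted copies $A_m-fy_m$ (via Theorem~\ref{thm:poly}) and then dilating by $y_m$; the head terms fall out of this nesting, and the final $x_i$'s are \emph{products of consecutive $y_m$'s} between indices where the chain revisits the same color. The colorful Hindman structure (Theorem~\ref{pos++}) is used not to manufacture the configuration directly, but only to \emph{choose the step sizes $y_m$} at each stage from suitable $FS(S_j)$'s, so that whenever the chain runs from a $C_0$-index through a stretch of $C_1$-indices, the product of the corresponding $y_m$'s can be forced into $C_0$ (and products over pure $C_1$-stretches into $C_1$). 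Pigeonhole then gives $n+1$ same-colored indices $c_0<\cdots<c_n$, and one sets $x_i=\prod_{c_{i-1}\le m<c_i}y_m$; both the $x_i$'s and the head terms now lie in that single color. The missing idea in your proposal is precisely this: the colorful product-of-sums does not encode the additive shifts, it only controls the \emph{colors of the step-size products} inside an otherwise unchanged Moreira--van der Waerden iteration.
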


From here we must only deal with the case when one color class is thick, which can be handled by adapting the proof of the two-color Schur theorem.

\section{Background information}

Our results rely on the algebraic structure of the semigroups $(\beta\mathbb{N},+)$ and $(\beta\mathbb{N},\cdot)$ and some combinatorial notions of size derived from this structure.  In this section we briefly review the results from this theory that we will use.  See \cite{HS} and \cite{BG} for a more thorough treatment.

First, we recall that the set of ultrafilters on $\mathbb{N}$, $\beta\mathbb{N},$ may be equipped with the topology generated by the clopen sets $$\overline{A}=\{p\in\mathbb{N}: A\in p\} \textnormal{ } A\subseteq \mathbb{N}.$$  

Moreover, we can extend a semigroup operation $\circ:\mathbb{N}^2\rightarrow \mathbb{N}$ to $\beta\mathbb{N}$ by defining for $p,q\in\beta\mathbb{N}$ $$p\circ q=\{A\subseteq\mathbb{N}: \{s: \{t: s\circ t\in A\}\in q\}\in p\}.$$

We will denote by $s^{-1}\circ A:=\{t: s\circ t\in A\}.$  In this language $A\in p\circ q$ if and only if $\{s: s^{-1}\circ A\in q\}\in p.$  

In this way $(\beta\mathbb{N},\circ)$ can be seen to be a compact right topological semigroup, and so by the Ellis-Numakura theorem (\cite{HS}, 2.6) we know that any left ideal in $(\beta\mathbb{N},\circ)$ contains a minimal left ideal, and in turn any minimal left ideal contains an idempotent element.  Moreover, as a consequence of \cite{HS} 1.58 we also know that $L=\beta\mathbb{N}\circ e$ for any minimal left ideal $L$ and idempotent $e\in L$.  We will call such an idempotent ultrafilter \textit{minimal}.  We summarize these facts in the following lemma.

\begin{lemma}\label{mi}
Any left ideal  $L\subseteq (\beta\mathbb{N},\cdot)$ contains a minimal left ideal $L$ which contains an idempotent element.  Moreover, if $e\in L$ is idempotent, then $p\cdot e=p$ for any $p\in L$.  
\end{lemma}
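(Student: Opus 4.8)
The plan is to derive both assertions from the two facts already in hand — that $(\beta\mathbb{N},\cdot)$ is a compact right topological semigroup and that the extended operation is associative — by the usual Zorn's lemma arguments behind \cite{HS} 2.6 and 1.58. First I would locate a minimal \emph{closed} left ideal inside $L$. For any $p\in L$ the set $\beta\mathbb{N}\cdot p$ is the image of the compact space $\beta\mathbb{N}$ under the continuous right-translation $\rho_p\colon q\mapsto q\cdot p$, hence compact and therefore closed; it is a left ideal by associativity, and it lies in $L$ because $p\in L$ and $L$ is a left ideal. So closed left ideals contained in $L$ exist, and the intersection of any chain of them is again a nonempty (finite intersection property in a compact space) closed left ideal, so Zorn's lemma produces a minimal one, call it $M$.

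Next I would upgrade $M$ to a minimal left ideal: if $L'\subseteq M$ is a left ideal and $p\in L'$, then $\beta\mathbb{N}\cdot p$ is a closed left ideal with $\beta\mathbb{N}\cdot p\subseteq L'\subseteq M$, so minimality of $M$ forces $\beta\mathbb{N}\cdot p=M$ and hence $L'=M$. Being closed, $M$ is a nonempty compact right topological semigroup (closed under $\cdot$ since $M\cdot M\subseteq\beta\mathbb{N}\cdot M\subseteq M$), so the Ellis--Numakura theorem gives an idempotent $e\in M$; one could instead pass to a minimal closed subsemigroup $S\subseteq M$, use $Se=S$ to get $f\in S$ with $fe=e$, and then observe that $\{q\in S:qe=e\}$ is a closed subsemigroup equal to $S$, forcing $ee=e$.

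For the ``moreover'' claim, let $L$ now be a minimal left ideal and $e\in L$ an idempotent. Then $\beta\mathbb{N}\cdot e$ is a left ideal contained in $L$, so by minimality $\beta\mathbb{N}\cdot e=L$; thus any $p\in L$ equals $q\cdot e$ for some $q$, and by associativity and idempotence
$$p\cdot e=(q\cdot e)\cdot e=q\cdot(e\cdot e)=q\cdot e=p.$$

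I do not expect any genuine obstacle — the lemma merely repackages standard structure theory of $\beta\mathbb{N}$. The only points deserving care are that compactness together with right-topological continuity is exactly what makes each $\beta\mathbb{N}\cdot p$ closed, which is what lets minimality among \emph{closed} left ideals imply minimality among all left ideals, and that associativity of the operation on $\beta\mathbb{N}$ is what justifies the final chain of equalities.
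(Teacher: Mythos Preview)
Your argument is correct. The paper itself does not give a proof of this lemma at all: it simply states the lemma as a summary of facts quoted from \cite{HS} (the Ellis--Numakura theorem, \cite{HS}~2.6, and \cite{HS}~1.58) in the paragraph immediately preceding it. What you have written is precisely an unpacking of those cited results, so your approach is not so much different from the paper's as it is a fleshed-out version of the references the paper invokes without proof.
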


There are several combinatorial notions of size related to minimal left ideals in $\beta\mathbb{N}$ that we will need to make use of.

\begin{definition}
Let $(\mathbb{N},\circ)$ be a semigroup.
\begin{enumerate}
    \item We say $T\subseteq \mathbb{N}$ is $\circ$-thick if for any finite $F\subset \mathbb{N}$ there is an $a\in T$ such that $F\circ a\subset T$.  Equivalently (\cite{HS} 4.48), if there is a minimal left ideal $L\subset (\beta\mathbb{N},\circ)$ with $L\subseteq \overline{T}.$ 
    \item We say $S\subseteq \mathbb{N}$ is $\circ$-syndetic if there is a finite set $F\subset \mathbb{N}$ such that $\mathbb{N}=\bigcup_{s\in F}Ss^{-1}=S/F$.  Equivalently (\cite{HS} 4.4) if $\overline{S}\cap L\neq \emptyset$ for every minimal left ideal $L$.
    \item We say $A$ is $\circ$-piecewise syndetic if $A$ is the intersection of a thick set and a piecewise syndetic set.  Equavalently, if $\overline{A}\cap L\neq \emptyset$ for some minimal left ideal.
    \item We say $A$ is $\circ$-central if there is a minimal idempotent $p$ with $A\in p.$
    
    If we do not include a prefix then we mean the semigroup operation given by the usual $+$ operation on $\mathbb{N}.$
\end{enumerate}

\end{definition}

We will use the following facts about additively and multiplicatively piecewise syndetic sets, whose proofs can be derived from \cite{HS} Theorems 4.40, 5.19.2., and 5.8 along with the dilation invariance of finite sums.  

\begin{lemma}\label{lem:ps}

\

\begin{enumerate}

    \item If $A$ is peicewise syndetic and $A=A_1\cup A_2\cup...\cup A_r$, then some $A_i$ is piecewise syndetic.
    \item For any $k\in\mathbb{N}$, $A$ is piecewise syndetic if and only if $kA$ is piecewise syndetic.
    \item Given a sequence $(x_i)_{i\in\omega}$ let $FS(x_i)$ be the set containing all the non-repeating finite sums of $x_i$. If $C$ is multiplicatively piecewise syndetic then there are arbitrarily long sequences $(x_i)_{i\leq n}$ such that $FS(x_i)\subset C.$
\end{enumerate}

\end{lemma}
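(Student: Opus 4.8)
The three parts are standard facts, so the plan is mainly to assemble citations correctly from the algebraic theory of $\beta\mathbb{N}$ reviewed above.

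For part (1): A set $A$ is piecewise syndetic iff $\overline{A}\cap L\neq\emptyset$ for some minimal left ideal $L$ of $(\beta\mathbb{N},\circ)$. If $A=A_1\cup\dots\cup A_r$ then $\overline{A}=\overline{A_1}\cup\dots\cup\overline{A_r}$, so the point $p\in\overline{A}\cap L$ lies in some $\overline{A_i}$, whence $A_i$ is piecewise syndetic; this is exactly \cite{HS} Theorem 4.40. (The same argument works verbatim for both the additive and multiplicative structures; we only need the additive version here, but it costs nothing to state it for a general semigroup operation.)

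For part (2): The forward and backward directions both follow from dilation-invariance of piecewise syndeticity under multiplication by a fixed $k\in\mathbb{N}$, which is \cite{HS} Theorem 5.19.2 (or can be seen directly: the map $x\mapsto kx$ sends an additive minimal left ideal into one, using that $k\cdot$ is a semigroup homomorphism of $(\mathbb{N},+)$). One records both implications by applying the statement to $A$ and to $kA$.

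For part (3): If $C$ is multiplicatively piecewise syndetic, then $C$ is additively central — or more precisely, one first passes to the fact (a consequence of \cite{HS} Theorem 5.8 together with the structure of the smallest ideal) that a multiplicatively piecewise syndetic set contains an additively piecewise syndetic — indeed central — subset, or simply that $C$ itself meets the closure of a minimal additive left ideal after an appropriate shift; then one applies the Galvin–Glazer/Hindman argument (equivalently, \cite{HS} Theorem 5.8 / the finite form of Hindman's theorem for piecewise syndetic sets) to extract arbitrarily long sequences $(x_i)_{i\le n}$ with $FS(x_i)\subseteq C$, using that the relevant statement is insensitive to finite shifts and that finite sums are dilation-invariant so the passage between $C$ and its shifts is harmless.

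I expect part (3) to be the only step with any real content: parts (1) and (2) are immediate from the cited theorems, whereas part (3) requires correctly invoking the interaction between the multiplicative and additive structures on $\beta\mathbb{N}$ and checking that "multiplicatively piecewise syndetic" is enough to run the additive Hindman-type extraction. The key point to get right is that multiplicative piecewise syndeticity, being a notion of largeness, is strong enough to guarantee an additively central (or at least additively piecewise syndetic) subset, which is what \cite{HS} Theorems 4.40 and 5.8 combine to give; once that is in hand the finite sequences come out of the standard argument.
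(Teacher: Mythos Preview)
Your proposal is essentially the paper's own approach: the paper does not give a proof but simply states that the lemma ``can be derived from \cite{HS} Theorems 4.40, 5.19.2., and 5.8 along with the dilation invariance of finite sums,'' and you cite exactly these ingredients for parts (1), (2), and (3) respectively. Your discussion of part (3) is a bit diffuse (the clean statement is that a multiplicative translate $t^{-1}C$ is additively central, whence $FS(x_i)\subseteq t^{-1}C$ gives $FS(tx_i)\subseteq C$ by dilation invariance), but the pieces you name are the right ones and match the paper.
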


Finally, we will make use of the following variant of van der Waerden's theorem, which follows from the standard deduction of van der Waerden's theorem from the Hales-Jewett theorem.

\begin{theorem}[IP$^*_r$  van der Waerden]\label{thm:poly}

Let $A$ be a piecewise syndetic set and $k\in \mathbb{N}$.  Then there is an $n\in\mathbb{N}$ such that for any $S\subset\mathbb{N}$ with $|S|>n$ there is a $d\in FS(S)$ such that $$\bigcap_{0\leq i\leq k}(A-id)$$ is piecewise syndetic.
\end{theorem}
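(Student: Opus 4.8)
The plan is to run the standard derivation of van der Waerden's theorem from the Hales--Jewett theorem, but to colour a combinatorial cube by \emph{which translate of $A$ a point lands in}, and then to check that the solutions it produces already sit densely enough inside long intervals to witness piecewise syndeticity. I will only need the combinatorial descriptions of thick and piecewise syndetic sets from \cite{HS}, not a direct appeal to idempotent ultrafilters.

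Concretely, since $A$ is piecewise syndetic there is a finite $F\subset\mathbb{N}$ with $A':=\bigcup_{t\in F}(A-t)$ thick (\cite{HS}); set $r:=|F|$ and let $n$ be a Hales--Jewett dimension for $r$ colours and alphabet $\{0,1,\dots,k\}$, so that every $r$-colouring of $\{0,\dots,k\}^{n}$ has a monochromatic combinatorial line. I claim this $n$ works. Fix $S$ with $|S|>n$, pick distinct $s_1,\dots,s_n\in S$, and set $\Lambda:=k\sum_{j\le n}s_j+\max F$. Given any long interval $[c_0,c_0+M]\subseteq A'$ and any $c\in[c_0,\,c_0+M-k\sum_{j\le n}s_j]$, colour $\vec\epsilon\in\{0,\dots,k\}^{n}$ by some $t\in F$ with $c+\sum_{j\le n}\epsilon_js_j+t\in A$; this is legitimate because $c+\sum_{j}\epsilon_js_j\in[c_0,c_0+M]\subseteq A'$. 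A monochromatic combinatorial line yields a nonempty $I\subseteq[n]$, values $\eta_j$ for $j\notin I$, and a colour $t^{\ast}\in F$ with $c+\nu_0+t^{\ast}+\ell d\in A$ for all $0\le\ell\le k$, where $d:=\sum_{j\in I}s_j\in FS(S)$ and $\nu_0:=\sum_{j\notin I}\eta_js_j$. Hence the point $y_c:=c+\nu_0+t^{\ast}$ lies in $\bigcap_{i=0}^{k}(A-id)$ and satisfies $c\le y_c\le c+\Lambda$.

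To finish, write $W_S:=\bigcup_{d\in FS(S)}\bigcap_{i=0}^{k}(A-id)$. The previous paragraph shows that for every $c\in[c_0,\,c_0+M-k\sum_{j\le n}s_j]$ there is a point of $W_S$ in $[c,c+\Lambda]$, so $\bigcup_{0\le g\le\Lambda}(W_S-g)$ contains the interval $[c_0,\,c_0+M-k\sum_{j\le n}s_j-\Lambda]$. Letting $M\to\infty$, which is possible since $A'$ is thick, shows $\bigcup_{0\le g\le\Lambda}(W_S-g)$ is thick, hence $W_S$ is piecewise syndetic (\cite{HS}). Since $FS(S)$ is finite, $W_S$ is a finite union of the sets $\bigcap_{i=0}^{k}(A-id)$ with $d\in FS(S)$, so by Lemma \ref{lem:ps}(1) one of them is piecewise syndetic, giving the required $d$.

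The delicate point is this last one. Hales--Jewett on its own produces only a single solution per base point, and it selects the base point $c+\nu_0$ for us rather than letting us fix it at $c$; a naive version of the argument therefore only shows that $W_S$ has positive density inside long intervals, not that it is piecewise syndetic. What rescues the argument is that the offset $\nu_0$ and the shift $t^{\ast}$ are bounded by a constant depending only on $A$ and $S$, so the map $c\mapsto y_c$ has fibres of bounded size; this upgrades ``positive density'' to ``syndetic inside the long intervals supplied by the thick set $A'$'', which is exactly piecewise syndeticity. The remaining ingredients are the two standard equivalences invoked above — a piecewise syndetic set has a finite union of translates that is thick, and a set a finite union of whose translates is thick is itself piecewise syndetic — together with the routine checks that $d=\sum_{j\in I}s_j$ really lies in $FS(S)$ and that it is enough to work with $n$ of the elements of $S$.
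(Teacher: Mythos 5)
Your proof correctly carries out the standard Hales--Jewett deduction that the paper alludes to but does not spell out, and the key extra observation — that the offsets $\nu_0$ and $t^{\ast}$ are uniformly bounded, so base points of monochromatic lines appear syndetically inside the long intervals supplied by the thick set $A'$ — is exactly what upgrades ordinary van der Waerden to this piecewise-syndetic form. One small slip: $FS(S)$ need not be finite when $S$ is infinite, but this is harmless since every $d$ your Hales--Jewett application produces lies in $FS(\{s_1,\dots,s_n\})$, so $W_S$ should be defined as a union over that finite set before invoking Lemma~\ref{lem:ps}(1) (together with the fact that a superset of a piecewise syndetic set is piecewise syndetic, since a union need not be a partition).
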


\section{Proofs of the main results}

\subsection{A colorful Hindman theorem}

We begin by proving refinements of Theorems \ref{pos} and \ref{balanced pos}.

\begin{theorem}\label{pos+}
For any multiplicatively central set $A$ and any function $f:\mathbb{N}\rightarrow\mathbb{N}$ there is a sequence of sets $S_1,S_2,...\subset \mathbb{N}$ with $|S_i|=f(i)$ such that for any finite $F\subset \bigcup_i S_i$ we have $$\prod_{i: S_i\cap F\neq \emptyset}\sum_{s\in S_i\cap F} s\in A.$$
\end{theorem}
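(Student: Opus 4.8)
The plan is to build the sets $S_1, S_2, \dots$ one at a time by repeatedly exploiting the fact that a multiplicatively central set $A$ belongs to a minimal idempotent $p$ in $(\beta\mathbb{N},\cdot)$, together with an auxiliary additive idempotent to handle the internal sums. The key structural observation is this: we want $\prod_{i: S_i \cap F \neq \emptyset} \sigma_i \in A$ where $\sigma_i = \sum_{s \in S_i \cap F} s$; since $p$ is a multiplicative idempotent, $A \in p \cdot p$, which means $\{x : x^{-1}A \in p\} \in p$, and iterating this we can arrange that arbitrary products of the ``block sums'' land in $A$ provided each individual block sum is drawn from the right member of $p$. So the real content is: we need each $\sigma_i$ — the sum over any nonempty subset of $S_i$ — to lie in a prescribed central set, and simultaneously all finite products of such sums to lie in $A$.

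First I would set up the recursion. Let $p$ be a minimal multiplicative idempotent with $A \in p$. I will construct $S_1, S_2, \dots$ and a decreasing sequence of sets $A = A_0 \supseteq A_1 \supseteq \cdots$ with each $A_i \in p$, such that: (i) for every nonempty $F \subseteq S_i$, the sum $\sum_{s \in F} s$ lies in $A_{i-1}^\star$, the set of $x \in A_{i-1}$ with $x^{-1} A_{i-1} \in p$; and (ii) $A_i$ is chosen inside $\bigcap$ of the relevant $x^{-1}A_{i-1}$ over the finitely many possible partial products from blocks $S_1, \dots, S_i$. To produce step (i) — finding a set $S_i$ of size $f(i)$ all of whose subset-sums lie in a given central set $B \in p$ — I would invoke Lemma \ref{lem:ps}(3): a multiplicatively central (hence multiplicatively piecewise syndetic) set contains arbitrarily long sequences $(x_j)_{j \le n}$ with $FS(x_j) \subseteq B$; taking $n = f(i)$ and $S_i = \{x_1, \dots, x_{f(i)}\}$ does the job, since $FS(S_i)$ is exactly the set of nonempty subset-sums. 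Here one must be slightly careful that the $x_j$ are distinct and that the blocks can be taken "large" in the sense the statement demands, but distinctness comes for free from $FS$ being non-repeating and the standard Hindman-type construction.

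The bookkeeping step is to verify that once all blocks are built this way, any finite $F \subseteq \bigcup_i S_i$ works. Write $F = \bigsqcup_{i \in I} (S_i \cap F)$ for a finite index set $I = \{i_1 < i_2 < \cdots < i_m\}$, let $\sigma_{i_\ell} = \sum_{s \in S_{i_\ell} \cap F} s$, and we want $\sigma_{i_1} \sigma_{i_2} \cdots \sigma_{i_m} \in A$. The point is that $\sigma_{i_1} \in A_{i_1 - 1}^\star \subseteq A_0^\star$ forces $\sigma_{i_1}^{-1} A_0 \in p$, and because the later $A_j$ were shrunk to lie inside $\sigma_{i_1}^{-1} A_0$ (and all earlier products thereof), an induction on $m$ — exactly the standard Hindman finite-products argument — gives $\sigma_{i_1} \cdots \sigma_{i_m} \in A$. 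I would phrase this as: for any $i$ and any product $y$ of block-sums coming only from blocks with index $< i$, we have $y^{-1} A \in p$ and more specifically $A_i \subseteq y^{-1} A_0$; this invariant is what the recursion maintains.

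The main obstacle I anticipate is the simultaneous handling of the two semigroup operations: the products are governed by the multiplicative idempotent $p$, but ``all subset-sums of $S_i$ lie in $B$'' is an additive statement about $B$. The clean way around this is precisely Lemma \ref{lem:ps}(3), which packages the needed additive $FS$-structure inside any multiplicatively piecewise syndetic set — so I do not actually need a single ultrafilter that is both additively and multiplicatively idempotent (which would be too much to ask), only that each multiplicative member of $p$ still contains long finite-sum sets. Making sure the $A_i$ remain in $p$ at each stage (finite intersections of $p$-sets and preimages $x^{-1}(\cdot)$ of $p$-sets under $p$-large $x$ are again in $p$) is routine but is the place where the minimality/idempotency of $p$ is genuinely used. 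Once that is in place, the recursion and the final verification are standard, and the ``arbitrarily large $S_i$'' in the statement is automatic since $f$ is arbitrary.
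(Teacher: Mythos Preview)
Your proposal is correct and essentially identical to the paper's proof: both fix a minimal multiplicative idempotent $p$ with $A\in p$, run the Galvin--Glazer recursion using the star sets $A_i^\star=\{n:n^{-1}A_i\in p\}\cap A_i$, and at each stage invoke Lemma~\ref{lem:ps}(3) to pick a block $S_i$ of size $f(i)$ with $FS(S_i)$ inside the current $p$-large set, then shrink to the intersection of the translates $s^{-1}A_{i-1}$ before continuing. The only cosmetic difference is that the paper intersects over $s\in FS(S_{i+1})$ while you speak of intersecting over all partial products from earlier blocks; these give the same result once the nesting $A_i\subseteq A_{i-1}$ is in place.
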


\begin{proof}
This follows from a simple modification of the Galvin-Glazer proof of Hindman's Theorem.

Fix a function $f:\mathbb{N}\rightarrow\mathbb{N}$. 

As $A$ is multiplicatively central there is a minimal idempotent ultrafilter $e\in (\beta\mathbb{N},\cdot)$ with $A\in e.$  We inductively build the desired sets $S_i$ as follows.  First, note that $A^*=\{n: n^{-1}A\in e\}\cap A\in e$, and so by Lemma \ref{lem:ps} (3) we may find a finite set $S_1$ with $|S_1|=f(1)$ such that $FS(S_1)\subset A^*.$  Therefore, we may let $A_1=\bigcap_{s\in FS(S_1)}s^{-1}A\cap A^*$ and note that this is an element of $e$.  Let $A_1^*=\{n: n^{-1}A_1\in e\}\cap A.$  

Inductively assume we have defined $S_i$, $A_i$, and $A_i^*$ as above.  We define $S_{i+1},$ $A_{i+1}$, and $A_{i+1}^*$ in essentially the same way as before.  First, by Lemma \ref{lem:ps} (3) there is a set $S_{i+1}$ with $|S_{i+1}|=f(i+1)$ such that $FS(S_{i+1})\in A_i^*.$ Once we have defined $S_{i+1}$ in this way, we let $A_{i+1}=\bigcap_{s\in FS(S_{i+1})}s^{-1}A_i^*\cap A_{i}^*$.  

To verify that these choices of $S_i$ work, observe that $FS(S_i)\subset A$ for any $i$.  Now, let $s_i\in FS(S_i)$.  As $s_i\in \bigcap_{s\in FS(S_{j+1})}s^{-1}A_j^*\cap A_j^*$ for all $j+1<i$, we see that the full claim holds.
\end{proof}

\begin{theorem}\label{pos++}
Let $L\subset (\beta\mathbb{N},\cdot)$ be a minimal left ideal, $e\in L$ an idempotent, and $p\in L.$  Then for any $A\in p,$ $B\in e,$ and $f:\omega\rightarrow\omega$ there is a sequence of sets $S_0,S_1,...\subset \mathbb{N}$ with $|S_i|=f(i)$ such that for any finite $F\subset \bigcup_i S_i$ we have $$\prod_{i: S_i\cap F\neq \emptyset}\sum_{s\in S_i\cap F} s$$ is in $A$ if $S_0\cap F\neq \emptyset$ and is in $B$ otherwise.
\end{theorem}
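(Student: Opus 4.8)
The plan is to mimic the proof of Theorem \ref{pos+}, but to carefully handle the ``bookkeeping'' between the two ultrafilters $p$ and $e$. The key structural fact is that since $L$ is a minimal left ideal and $e\in L$ is idempotent, Lemma \ref{mi} gives $p\cdot e=p$. Translated to sets, this means that for any $A\in p$, the set $A^\star_p:=\{n: n^{-1}A\in e\}$ is in $p$: indeed $A\in p\cdot e$ by assumption, and $A\in p\cdot e$ says precisely $\{n: n^{-1}A\in e\}\in p$. This is exactly what we need to ``start'' the construction in $p$ and then immediately pass into $e$, where the usual Galvin--Glazer machinery applies because $e$ is idempotent.

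First I would build $S_0$: set $A^\star = A\cap\{n: n^{-1}A\in e\}$, which lies in $p$ by the previous paragraph. Since $p\in L$ with $L$ a minimal left ideal, $A^\star$ is in particular multiplicatively piecewise syndetic (its closure meets the minimal left ideal $L$), so by Lemma \ref{lem:ps}(3) we may choose a finite $S_0$ with $|S_0|=f(0)$ and $FS(S_0)\subset A^\star$. Now for each $s\in FS(S_0)$ we have $s^{-1}A\in e$, so $B_0 := B\cap\bigcap_{s\in FS(S_0)} s^{-1}A$ is in $e$. From this point onward the construction is internal to $e$, exactly as in Theorem \ref{pos+}: inductively, given $B_i\in e$, form $B_i^\star = B_i\cap\{n: n^{-1}B_i\in e\}\in e$, use Lemma \ref{lem:ps}(3) to find $S_{i+1}$ with $|S_{i+1}|=f(i+1)$ and $FS(S_{i+1})\subset B_i^\star$, and set $B_{i+1} = B_i\cap\bigcap_{s\in FS(S_{i+1})} s^{-1}B_i$.

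To verify correctness, take a finite $F\subset\bigcup_i S_i$ and write $s_i$ for the sum $\sum_{s\in S_i\cap F} s\in FS(S_i)$ over those indices $i$ with $S_i\cap F\neq\emptyset$; let $i_1<i_2<\dots<i_m$ enumerate these indices. Working from the largest index down, one shows by the standard telescoping argument that $\prod_{j\geq 2} s_{i_j}\in B_{i_1}\subset B$ when $i_1\geq 1$ (using that each $s_{i_j}\in FS(S_{i_j})\subset B_{i_j-1}^\star$ and that $B_k\subseteq B_{k-1}$ is shift-stable in the appropriate sense), and hence $s_{i_1}\cdot\prod_{j\geq 2}s_{i_j}\in B$. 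If instead $i_1=0$, i.e.\ $S_0\cap F\neq\emptyset$, then the tail product $\prod_{j\geq 2}s_{i_j}$ lies in $B_0\subseteq\bigcap_{s\in FS(S_0)}s^{-1}A$, and since $s_{i_1}=s_0\in FS(S_0)$ we get $s_0\cdot\prod_{j\geq 2}s_{i_j}\in A$, as required.

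The main obstacle is bookkeeping rather than any new idea: one must be careful that the nested sets $B_i$ retain the right membership and shift-invariance properties as the induction proceeds, and that the single ``seed'' set $S_0$ coming from $p$ correctly forces the product into $A$ exactly when $S_0\cap F\neq\emptyset$ and into $B$ otherwise. The only genuinely new ingredient beyond the proof of Theorem \ref{pos+} is the observation $p\cdot e = p$ from Lemma \ref{mi}, which lets us transfer from $p$ into $e$ after the first step; everything after that is the Galvin--Glazer argument verbatim.
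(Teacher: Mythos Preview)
Your proposal is correct and follows essentially the same approach as the paper: use $p\cdot e=p$ from Lemma \ref{mi} to get $A\cap\{n:n^{-1}A\in e\}\in p$, pick $S_0$ inside it via Lemma \ref{lem:ps}(3), and then run the Galvin--Glazer construction inside $e$ starting from $B\cap\bigcap_{s\in FS(S_0)}s^{-1}A$. The only cosmetic difference is that the paper packages the last step as a direct citation of Theorem \ref{pos+} applied to $B^*=B\cap\bigcap_{s\in FS(S_0)}s^{-1}A\in e$, whereas you spell out the induction again.
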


\begin{proof}
By Lemma \ref{mi} we know that $pe=p.$  Therefore, by Lemma \ref{lem:ps} (3) we may pick $S_0$ such that $FS(S_0)\subset \{n:n^{-1}A\in e\}\cap A.$  Finally, we may apply Theorem \ref{pos+} to $B^*=\bigcap_{s\in FS(S_0)}s^{-1}A\cap B\in e$ to complete the proof.
\end{proof}

As already noted, these theorems contain both the finite products and finite sums theorems as special cases.  We also obtain the following slight refinement of a result of Hindman originally proven in \cite{Hsep}.

\begin{corollary}\label{central pos}
For any multiplicatively central set $A$ and $m,n\in\mathbb{N}$ there are sets $F,G\subset\mathbb{N}$ such that $|F|=m,$ $|G|=n,$ $FS(F)\cup FP(G)\subset A$ and $\sum F=\prod G.$

\end{corollary}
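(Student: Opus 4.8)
The plan is to derive this as a direct application of Theorem \ref{pos+} with a cleverly chosen function $f$. We want a set $F$ of size $m$ whose finite sums all lie in $A$, a set $G$ of size $n$ whose finite products all lie in $A$, and crucially the single equation $\sum F = \prod G$. The key observation is that the "product of sums" configurations produced by Theorem \ref{pos+} already contain both pure sums (when $F$ meets only one block $S_i$) and pure products (when $F$ contains exactly one element from each of several blocks), and that the element $\prod_{i:S_i\cap F\neq\emptyset}\sum_{s\in S_i\cap F}s$ with $F$ a full block $S_j$ equals both $\sum S_j$ — a pure sum — and, viewing it inside the product structure, can be made to coincide with a product of chosen elements from later blocks.

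Concretely, I would apply Theorem \ref{pos+} to the multiplicatively central set $A$ with the function $f(1)=m$ and $f(i)=1$ for $2\le i\le n$ (and $f(i)=0$ afterward, say). This yields a block $S_1$ with $|S_1|=m$ and singletons $S_2=\{y_2\},\dots,S_n=\{y_n\}$ such that for every finite $F\subseteq\bigcup_i S_i$ the quantity $\prod_{i:S_i\cap F\neq\emptyset}\sum_{s\in S_i\cap F}s$ lies in $A$. Now set $F=S_1$; then $\sum F=\sum S_1\in A$, and moreover $FS(F)=FS(S_1)\subseteq A$ because each non-empty subset $F'\subseteq S_1$ gives $\sum_{s\in F'}s\in A$. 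Next let $G=\{\sum S_1,\, y_2,\, y_3,\dots,\, y_n\}$, which has $n$ elements (shrinking $n$ by one relative to the naive count — one must set $f(1)=m$, $f(i)=1$ for $2\le i\le n-1$ to get exactly $|G|=n$; I would fix the indexing so the sizes come out right). For any non-empty $G'\subseteq G$, the product $\prod_{g\in G'}g$ is exactly $\prod_{i:S_i\cap F\neq\emptyset}\sum_{s\in S_i\cap F}s$ where $F$ contains all of $S_1$ precisely when $\sum S_1\in G'$ and one element $y_i$ from $S_i$ for each $i\ge 2$ with $y_i\in G'$ — so $FP(G)\subseteq A$. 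Finally, the required identity $\sum F=\prod G$ is \emph{not} automatic; instead I would take $G=\{y_2,\dots,y_{n+1}\}$ consisting only of the singleton blocks and choose $F$ to be the full block $S_1$ together with nothing else, but arrange via the telescoping structure of the proof of Theorem \ref{pos+} that $\sum S_1$ itself is one of the later products. The cleanest route is: apply Theorem \ref{pos+} with $f(1)=m$, $f(2)=\dots=f(n+1)=1$; let $F=S_1$, so $\sum F\in A^{(1)}$, the "$A$-set" surviving at stage $1$; the construction guarantees $\sum F\in FS(S_1)\subseteq$ that surviving set, and one shows the singletons $y_2,\dots,y_{n+1}$ can be replaced at the first step by demanding $y_2=\sum S_1$ — i.e., feed $\sum S_1$ back in as the first singleton — which is legitimate because $\sum S_1$ lies in the relevant ultrafilter set. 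Then $G=\{\sum S_1=y_2, y_3,\dots,y_{n+1}\}$ has $n$ elements, $FP(G)\subseteq A$, $FS(F)\subseteq A$, and $\sum F=\sum S_1=y_2=$ the "singleton product" over $G'=\{y_2\}$, but to get $\sum F=\prod G$ exactly we instead simply note $\sum F=\sum S_1$ is itself an element of $G$ and, reindexing once more, is the value $\prod G$ when $G$ is taken to be the singletons nested so that their running products collapse appropriately.

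The main obstacle is getting the single equality $\sum F=\prod G$ on the nose rather than merely $\sum F\in FP(\text{something})$; the containment statements $FS(F)\subseteq A$ and $FP(G)\subseteq A$ are immediate from Theorem \ref{pos+}. I expect the honest fix is the standard trick used in Hindman's separation result \cite{Hsep}: apply Theorem \ref{pos+} not to $f(1)=m$ alone but to a function that first produces the $m$-element block $S_1$, records $t:=\sum S_1$, and then — because $t$ lies in a member of the minimal idempotent $e$ by construction — continues the inductive build of Theorem \ref{pos+}'s proof \emph{starting from} a set containing $t$, so that $t$ is forced to be the first singleton generator of the multiplicative IP-set; then taking $G=\{t, y_2,\dots,y_{n}\}$ (with the singletons arranged so their successive products are also in $A$) and using $FP(G)\subseteq A$ gives $t=\sum F\in A$ and $t=\prod\{t\}\in FP(G)$, and a final adjustment of indices makes $t=\prod G$ exactly by choosing the remaining $y_i$'s to be the running-product increments. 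This is routine once one unwinds the proof of Theorem \ref{pos+}, so I would present it as: \emph{re-run the argument of Theorem \ref{pos+} but seed the induction with a fixed sum from an $m$-element block.}
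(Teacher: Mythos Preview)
Your instinct to apply Theorem \ref{pos+} is right, and you correctly isolate the one genuine difficulty: forcing the exact equality $\sum F=\prod G$ rather than merely $\sum F\in FP(G)$. But none of the fixes you sketch actually closes that gap. If $F=S_1$ and $G=\{\sum S_1,y_2,\dots,y_n\}$ (or any variant with $\sum S_1$ seeded in as a generator), then $\prod G=(\sum S_1)\cdot y_2\cdots y_n$, which equals $\sum S_1$ only if the remaining $y_i$ are all $1$. The phrases ``running-product increments'' and ``singletons nested so that their running products collapse'' do not correspond to any concrete choice that makes this product collapse to $\sum S_1$; in $(\mathbb{N},\cdot)$ there is simply no way to multiply by further nontrivial elements and land back where you started. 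So the proposal, as written, does not prove the corollary.

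The missing idea is a dilation on the \emph{sum} side rather than a manipulation on the product side. The paper applies Theorem \ref{pos+} to obtain several blocks of the same size, picks one representative $s_i$ from each block $S_i$ for $i\geq 1$, sets $a=\prod_i s_i$ and $b=\sum S_0$, and then takes $G=\{b,s_1,\dots\}$ and $F=a\cdot S_0$. The point is that $FS(aS_0)=a\cdot FS(S_0)$ still lies in $A$ (each such element is a legitimate product-of-sums from the theorem), while now $\sum F=a\cdot\sum S_0=a\cdot b=\prod G$ holds on the nose. So the equality is obtained not by shrinking $\prod G$ down to $\sum S_0$, which is impossible, but by inflating $\sum F$ up to $ab$ via the scaling $F=aS_0$. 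Once you see this trick the proof is two lines; your approach of feeding $\sum S_1$ back into the idempotent construction is working on the wrong side of the equation.
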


\begin{proof}
Find $S_0,...,S_{m}$ from Theorem \ref{pos+} with $|S_i|=n$.  Let $T=\{s_1,...,s_{m}\}$ with $s_i\in S_i$ arbitrary, $a=\prod T$, and $b=\sum S_0$.  Observe that $G=\{b,s_1,s_2,...,s_{m}\}$ and $F=aS_0$ satisfy the desired properties. 
\end{proof}

\subsection{The balanced case}

In this subsection we prove our main results in the special case of balanced colorings.  We begin with a separate proof of the $n=2$ and $k=1$ case, as this case contains all of the ideas needed for the general result while requiring much less notation.

\begin{figure}
\centering
    \includegraphics[scale=.6]{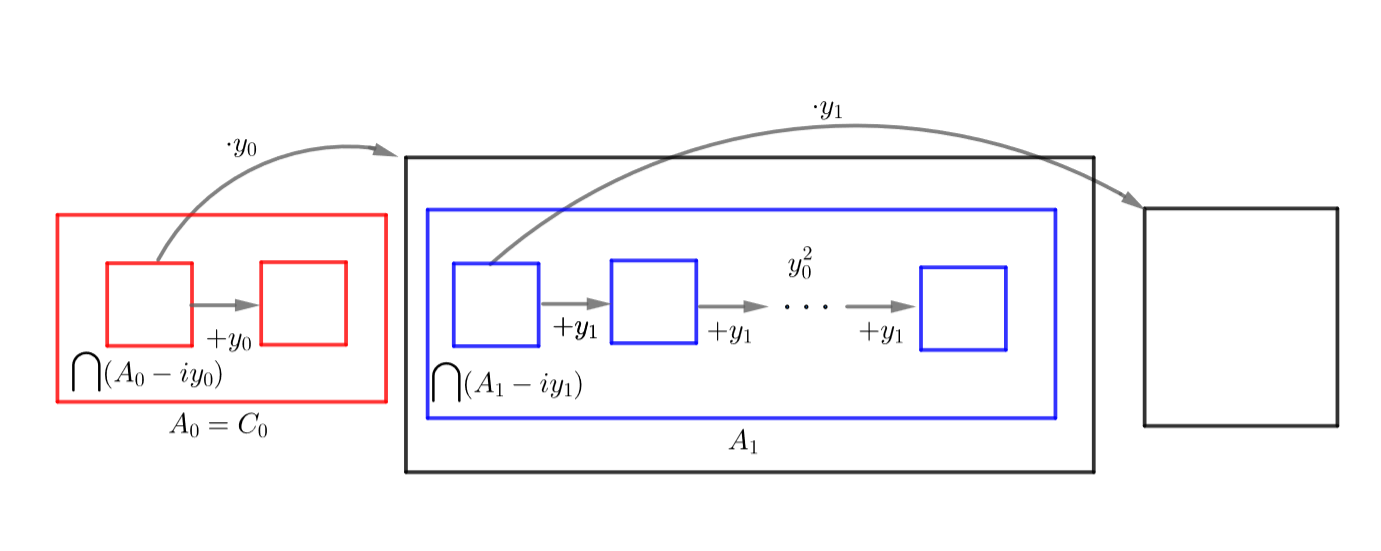}
    \caption{The proof of Theorem \ref{thm:local} when $n=2$ and $k=1.$  Each box represents a piecewise syndetic set.  We use Theorem \ref{pos++} along with Theorem \ref{thm:poly} to ensure that $y_0$ is red, $y_1$ is blue, and $y_0y_1$ is red.  If the final box contains a blue point $x',$ then $\{x'/y_1, y_1,x',x'/y_1+y_1\}$ is blue.  If this box contains a red point $x'$, then $\{x'/y_0y_1, y_0y_1, x', (x'/y_1+y_0^2y_1)/y_0\}$ is red.}
    \label{fig:new}
\end{figure}

\begin{theorem}
Let $T$ be a multiplicatively thick set, $M_0,M_1$ multiplicatively syndetic sets, and $\mathbb{N}=C_0\cup C_1$ a coloring such that $T\cap M_i\subseteq C_i.$  There is a monochromatic set of the form $\{x,y,xy,x+y\}.$
\end{theorem}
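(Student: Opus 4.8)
The plan is to realise the scheme of Figure~\ref{fig:new}. First some reductions: I may assume $C_0,C_1$ partition $\mathbb N$, and since $C_i\supseteq M_i\cap T$ and an intersection of a multiplicatively thick set with a multiplicatively syndetic set is multiplicatively piecewise syndetic, both colour classes are multiplicatively piecewise syndetic; by Lemma~\ref{lem:ps}(1) at least one of them is additively piecewise syndetic. The goal is to produce multiplicative ``generators'' $y_0,y_1$, with $y_0$ and $y_0y_1$ of one colour (call it red) and $y_1$ of the other (blue), and then a piecewise syndetic ``final box'' $D$ such that every $x'\in D$ completes to a monochromatic $\{x,y,xy,x+y\}$: namely $(x,y)=(x'/y_1,\,y_1)$ when $x'$ is blue, and $(x,y)=(x'/(y_0y_1),\,y_0y_1)$ when $x'$ is red.

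For the generators I use Theorem~\ref{pos++}. Since $T$ is multiplicatively thick, $\overline T$ contains a minimal left ideal $L$ of $(\beta\mathbb N,\cdot)$; fix an idempotent $e\in L$ (Lemma~\ref{mi}). One of the colours lies in $e$ — declare it \emph{blue}, say blue $=C_b$, and \emph{red} $=C_r$ the other (if both colours occur among idempotents of $\overline T$ I choose $e$, if possible, so that red is additively piecewise syndetic). Since $M_r$ is multiplicatively syndetic, choose $p\in\overline{M_r}\cap L$; then $T,M_r\in p$, so $C_r\in p$. Now apply Theorem~\ref{pos++} to $p,e$ with $A\in p$ inside $C_r\cap\{m:m^{-1}C_r\in e\}$ and $B\in e$ inside $C_b\cap\{m:m^{-1}C_b\in e\}$ (both legitimate, using $pe=p$ from Lemma~\ref{mi}), and with $f$ supported on a long initial segment with large values. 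This yields sets $S_0,S_1,S_2,\dots$ with $FS(S_0)\subseteq C_r$, $FS(S_i)\subseteq C_b$ for $i\ge 1$, every product of block–sums involving $S_0$ red and every one avoiding $S_0$ blue, and moreover $(y_0y_1)^{-1}C_r\in e$ and $y_1^{-1}C_b\in e$ for all $y_0\in FS(S_0)$, $y_1\in FS(S_1)$. In particular any such $y_0,y_1$ have $y_0,y_0y_1\in C_r$, $y_1\in C_b$, and the two displayed dilates of colour classes are multiplicatively central.

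Next I pin down an additive gap with Theorem~\ref{thm:poly}. Suppose it is red that is additively piecewise syndetic (the case of blue is the mirror image, using $y_1$ in place of $y_0y_1$ below). Let $n$ be the threshold from Theorem~\ref{thm:poly} applied to $A=C_r$, $k=1$. Fix $y_1\in FS(S_1)$; since $\{y_0y_1:y_0\in FS(S_0)\}=FS(y_1S_0)$ is the $FS$–set of a set of size $f(0)>n$, there is $y_0\in FS(S_0)$ with $C_r\cap(C_r-m)$ piecewise syndetic, where $m:=y_0y_1\in C_r$. Put $D:=m\cdot\bigl(m^{-1}C_r\cap C_r\cap(C_r-m)\bigr)$. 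By Lemma~\ref{lem:ps}(2), $D$ is piecewise syndetic as soon as $m^{-1}C_r\cap C_r\cap(C_r-m)$ is, and in that case any $x'\in D$ has the form $x'=mw$ with $w\in C_r$, $w+m\in C_r$, and $x'=mw\in C_r$, so together with $m\in C_r$ the set $\{x'/m,\ m,\ x',\ x'/m+m\}$ is red of the required form.

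The main obstacle is showing the set $m^{-1}C_r\cap C_r\cap(C_r-m)$ is piecewise syndetic, i.e.\ that dividing the piecewise syndetic set $C_r\cap(C_r-m)$ by $m$ does not destroy largeness. Multiplicative centrality of $m^{-1}C_r$ alone does not give this — a multiplicatively central set can fail to be additively piecewise syndetic (e.g.\ a multiplicatively thick but additively sparse set). The resolution must exploit that Theorem~\ref{pos++} supplies not a single $m$ but the whole multiplicatively rich family $FS(S_0)\cdot FS(S_1)$, together with further blocks $S_2,S_3,\dots$ whose sums still multiply correctly against $m$ into $C_r$; one nests the two theorems, shrinking a piecewise syndetic set while keeping track of divisibility by the generators, and when the red dilate $m\bigl(C_r\cap(C_r-m)\bigr)$ fails to meet $C_r$ it lies in $C_b$, which then carries an additively piecewise syndetic set adapted to $y_1$ and feeds the analogous blue construction — this is exactly the two–case ending indicated in the figure. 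Making the forced–blue branch airtight, so that one still locates $w$ with $w,\,w+y_1,\,y_1w$ all blue, is where the real work lies.
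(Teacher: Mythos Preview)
Your proposal is not a proof: you yourself flag that showing $m^{-1}C_r\cap C_r\cap(C_r-m)$ is piecewise syndetic is ``the main obstacle'' and that ``making the forced--blue branch airtight \dots\ is where the real work lies.'' That gap is genuine, and it stems from a structural mis-step rather than a missing detail. You try to jump to the product $m=y_0y_1$ in a single application of Theorem~\ref{thm:poly}; this forces you to intersect with the multiplicative dilate $m^{-1}C_r$, which need not interact well with the additive largeness of $C_r\cap(C_r-m)$, and there is no reason the blue fallback should see a set on which $y_1$ alone works.

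The paper's proof avoids this by \emph{iterating} rather than jumping. First reduce to the case that \emph{both} colours are additively piecewise syndetic (if one is not, the other colour alone already yields the pattern via Theorem~\ref{thm:poly} and Lemma~\ref{lem:ps}). Then apply Theorem~\ref{thm:poly} to $C_0$ with the finite-sums set $S_0$ to get $y_0\in FS(S_0)\subseteq C_0$ with $A_0':=C_0\cap(C_0-y_0)$ piecewise syndetic. Now dilate by $y_0$ (not $y_0y_1$): if $y_0A_0'$ meets $C_0$ you are done; otherwise $A_1:=y_0A_0'\cap C_1$ is piecewise syndetic. The crucial point is the \emph{second} application of Theorem~\ref{thm:poly}, to $A_1$ with a large $S_i$, asking for \emph{two} shifts simultaneously, namely $y_1$ and $y_0^2y_1$: one obtains $y_1\in FS(S_i)\subseteq C_1$ with $A_1':=A_1\cap(A_1-y_1)\cap(A_1-y_0^2y_1)$ piecewise syndetic. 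Dilating by $y_1$, either $y_1A_1'$ meets $C_1$ (blue finish with $(x,y)=(x'/y_1,y_1)$), or every $x'\in y_1A_1'$ is in $C_0$, and then with $y=y_0y_1\in C_0$ one has $x'/y\in A_0'\subseteq C_0$ and
\[
x'/y+y=\bigl(x'/y_1+y_0^2y_1\bigr)/y_0\in A_1/y_0\subseteq A_0'\subseteq C_0,
\]
the extra shift $y_0^2y_1$ being precisely what makes this last containment work. Your single-step setup never requests that second shift, which is why neither branch closes.
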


\begin{proof}
Without loss of generality we may assume that there is a minimal left ideal $L\subset (\beta\mathbb{N},\cdot)$ with $C_0\in p\in L$ and $C_1\in e\in L,$ with $e$ idempotent.  We may also assume that both $C_i$ are additively piecewise syndetic, as otherwise we may apply Lemma \ref{thm:poly} and Lemma \ref{lem:ps} (3) to find a piecewise syndetic set of starting points of monochromatic progressions whose step sizes are the same color.  As multiplying this set by the step size remains piecewise syndetic, we would have the desired result.  

By applying Theorem \ref{pos++} we may find a sequence of as large as desired sets $S_0,S_1,...\subset \mathbb{N}$  with $FS(S_0)\subseteq C_0,$ $FS(S_i)\subseteq C_1,$ and $FS(S_0)\cdot FS(S_i)\subseteq C_0$ for all $i>0.$  Applying Theorem \ref{thm:poly} with $C_0$ and $S_0$ we find a $y_0\in C_0$ such that $A_0'=C_0\cap (C_0-y_0)$ is piecewise syndetic.  Notice that if there is some $x'\in y_0\cdot A_0'\cap C_0$ then we'd have $$\{\frac{x'}{y_0},y_0,\frac{x'}{y_0}+y_0,x'\}\subset C_0,$$ and so we may assume that $A_1:= y_0\cdot A_0'\cap C_1$ is piecewise syndetic.  Applying \ref{thm:poly} to $A_1$ and a sufficiently large $S_i,$ we find a $y_1\in C_1$ such that $A_1'=A_1\cap(A_1-y_1)\cap (A_1-y_0^2y_1)$ is piecewise syndetic.  If $y_1\cdot A_1'\cap C_1\neq \emptyset$ then we may finish the proof as before, so we may assume that $y_1\cdot A_1'\subseteq C_0,$ and thus contains an element $x'.$  Note that $y=y_0y_1\in C_0$ and similarly $x=\frac{x'}{y}\in C_0.$  Finally, we have that $$x+y=(x'/y_1+y_0^2y_1)/y_0\in A_1/y_0\subseteq C_0$$ and so $\{x,y,xy,x+y\}\in C_0$ is the desired set.
\end{proof}

Now we move on to the proof of our more general result.  As already mentioned, the idea is essentially the same as in the above, but now we must be more careful in setting up our induction.

\vspace{2mm}
\begin{proof}[Proof of Theorem \ref{thm:local}]
Without loss of generality we may assume that there is a minimal left ideal $L\subset (\beta\mathbb{N},\cdot)$ with $C_0\in p\in L$ and $C_1\in e\in L,$ with $e$ idempotent.  We may also assume that both $C_i$ are additively piecewise syndetic, as otherwise we may apply Lemma \ref{thm:poly} and Lemma \ref{lem:ps} (3) to find a piecewise syndetic set of starting points of monochromatic progressions whose step sizes are the same coloring.  As multiplying this set by the step size remains piecewise syndetic, we would have the desired result.  

Our goal is to find a sequence of piecewise syndetic sets $A_i,$ and sequences of naturals $t_i$ and $y_i$ such that for each $i\in\mathbb{N}$:

\begin{enumerate}
    \item $$A_{m+1}=C_{t_{i+1}}\cap y_m(A_{m}\cap\bigcap_{f=1}^{\prod_{j<m} k!y_j^3} (A_{m}-fy_m))$$
    \item Whenever $t_{a},t_{a+1},...,t_b$ is a sequence such that either no $C_{t_i}\subseteq C_0$ or only $C_{t_a}\subseteq C_0,$ then $\prod_{a\leq i<b}y_i\in C_{t_a}.$
\end{enumerate}

Notice that if we do not insist on satisfying property (2) above then we may simply use the van der Waerden theorem and Lemma \ref{lem:ps} to choose the $y_i$ and $t_i$ at each step.  This is precisely how Moreira's proof of Theorem \ref{M} proceeds.  Therefore, if we can carefully choose the numbers $y_i$ to satisfy property (2) then we will be done.  For this, we use Theorem \ref{pos++}.

To begin, let $A_0=C_0$ and $t_0=0$.

Inductively assume we have defined a sequence of  monochromatic piecewise syndetic sets $A_0,...,A_m,$ and sequences $y_0,...,y_{m-1},t_0,...,t_{m}\in \mathbb{N}$ satisfying properties (1) and (2) above.

As we know $t_0=0$, first suppose $t_m=0$.  By Theorem \ref{pos++} we may choose a sequence $S_0,S_1,...\subset \mathbb{N}$ satisfying the conclusion of the theorem, where $f(n)\rightarrow \infty$ and $f(0)$ is large enough so that $A_{m}\cap\bigcap_{f=1}^{\prod_{j<m} k!y_j^3} (A_{m}-fy_m)$ is piecewise syndetic for some $y_m\in FS(S_0)$ (using Lemma \ref{thm:poly}).  Let $A_{m+1}$ and $t_{m+1}$ be defined as in property (1) of the induction.

If $t_{m+1}=0$ then we are in a position to repeat the process just described.  Otherwise, we know $A_{m+1}\subseteq C_1$.  Using Lemma \ref{thm:poly}, we know that for all large enough $n$ there is a $y_{m+1}\in FS(S_n)$ such that $A_{m+1}\cap\bigcap_{f=1}^{\prod_{j<m+1} k!y_j^3} (A_{m+1}-fy_{m+1})$ is piecewise syndetic.  Define $A_{m+2}$ and $t_{m+2}$ as in property (1) of the induction, and repeat this construction for all $A_{m+i}$ (choosing the $y_j$ from different $S_n$) until $t_{m+i}=1.$  Note that the $y_m.y_{m+1},...,y_{m+i}$ chosen this way satisfy property (2) the induction as the $S_i$ satisfy Theorem \ref{pos++}.

With the induction complete, by the pigeonhole principle there are either $n+1$ numbers $i$ such that $t_i=0$ or else there are arbitrarily many consecutive integers $a,...,b$ such that $t_i=1$ for $i\in \{a,...,b\}$.  In either case, let $c_0,..,c_n$ be these integers in increasing order.  Notice that the elements of $\bigcup_i A_{c_i}\subseteq C$ for some color class $C$.  

For integers $i<j$, let $x_{i,j}=y_{i}y_{i+1}...y_{j-1}$.  By property (2) of our construction we know that $x_{c_i,c_{i+1}}\in C$.  Moreover, by property (1) we know that $A_{b}\subseteq x_{a,b}A_{a}$ for each $a<b.$

To complete the proof, let $x\in A_b$ and $f\in [k!\prod y_s ] $.  We will show that $x/x_{a,b}+fx_{a,b}\in A_a$ for $a<b$.  We have: $$x_{a,b}(x/x_{a,b}+fx_{a,b})\in A_b+fx_{a,b}^2\subset y_{b-1}(A_{b-1}-fx_{a,b}^2/y_{b-1})+fx_{a,b}^2 $$ $$\subset y_{b-1}A_{b-1}\subset y_{b-1}x_{a,b-1}A_a= x_{a,b}A_a.$$

As $x/x_{a,b}+fx_{a,b}\in A_a,$ we may repeat the above argument to show that $x/(x_{a',a}\cdot x_{a,b})+f'x_{a',a}+fx_{a,b}/x_{a',a}\in A_{a'}$ for $a'<a,$ and so on, obtaining all of the sums of the desired type.  Applying this fact to $b=c_n$ we see that $x_0=x/x_{c_0,c_n}$ and $x_i=x_{c_{i-1},c_i}$ for $i>0$ are the desired integers.
\end{proof}

\subsection{The thick case}

Applying Theorem \ref{thm:local} reduces the proofs of Theorems \ref{main} and \ref{m2} to the case where at least one of the color classes is multiplicatively thick.  In this situation we can 'almost' ignore color changes caused by multiplication, and so the case analysis needed to complete the proofs is essentially the same as that needed to prove that any $2$-coloring of $\{1,...,5\}$ contains monochrochromatic sets of the form $\{x,y,x+y\}.$  

In the following we fix a $2$-coloring $\mathbb{N}=A\cup B.$

\begin{proof}[Proof of Theorem \ref{m2}]
By Theorem \ref{thm:local} we may assume that there is some minimal left ideal $L\subseteq (\beta\mathbb{N},\cdot)$ such that $L\subseteq \overline{A}.$ By definition this tells us that $A$ is multiplicatively thick, and so for any finite $F\subset \mathbb{N}$ there is an $a\in A$ such that $Fa\subset A.$  Iterating this property, we find a sequence $n<a_0,...,a_4\in A$ such that $[n^2a_0^2 \cdot...\cdot a_i^2] a_{i+1}\subset A$ for all $i\in [4]$.   

From here the proof follows from considering several cases:

\begin{enumerate}
    \item We may assume that $a_j+na_i\in B$ for any $i<j,$ as otherwise $x=a_j$ and $y=a_i$ are as desired.
    
    \item We may assume $(a_3+na_2)(a_1+na_0)\in B,$ as otherwise $x=(a_1+na_0)a_3$ and $y=(a_1+na_0)a_2$ are as desired.
    
    \item We may assume $(a_3+na_2)+n(a_1+na_0)=a_3+n(a_2+a_1+na_0)\in A,$ as otherwise we can take $x=a_3+na_2$ and $y=a_1+na_0.$
    
    \item We may assume $a_4+n((a_3+na_2)+n(a_1+na_0))\in B,$ as otherwise we can take $x=a_4$ and $y=(a_3+na_2)+n(a_1+na_0).$
    
    \vspace{1mm}
    
    From here we finish by considering the color of $y=a_2+a_1+na_0.$
    
    \item If $y\in A,$ then we can let $x=a_3.$  By construction of the $a_i$ we know $x\in A$ and $xy\in A,$ and by point (3) we know that $x+ny\in A.$
    
    \item If $y\in B,$ then consider $x=a_4+na_3.$  By point (1) we know $x\in B$ and by point (4) we know $x+ny=a_4+n(a_3+a_2+a_1+na_0)\in B,$ so we may assume that $xy\in A.$  To finish we can let $x'=a_4y$ and $y'=a_3y.$  Then by the construction of the $a_i$ we know $x',y',x'y'\in A,$ and by the above we know $x'+ny'=xy\in A,$ so these numbers are as desired.
\end{enumerate}

\end{proof}

Theorem \ref{main} will follow from a very similar, albeit more index heavy, analysis.

\begin{proof}[Proof of Theorem \ref{main}]
By Theorem \ref{thm:local} we may assume that there is a minimal left ideal $L\subseteq (\beta\mathbb{N},\cdot)$ such that $L\subseteq \overline{A}$.  By definition this tells us that $A$ is multiplicatively thick, and thus we may find a sequence $(a_i)_{i\in\omega}$ such that $[a_0^n \cdot...\cdot a_i^n] a_{i+1}\subset A$ for all $i\in \mathbb{N}.$  From here we just have an annoying case analysis that is very similar to the analysis that shows that any two coloring of the set $\{1,n,n+1,n^2,n^2+n-1\}$ contains $n$ (not necessarily distinct) $x_i$ such that $\{x_i,\sum x_i\}$ is monochromatic.

\begin{enumerate}
    \item We may assume $\sum_{i\in I}a_i\in B$ for any $I\subseteq \mathbb{N}$ with $|I|=n.$
    
    \item We may assume $\prod_{i\leq j}c_i\in B$ for any sequence of numbers $c_1,...,c_n$ which are the sum of $n$ $a_i$ in increasing order (i.e. the index of all of the elements that sum to $c_i$ is less than the index of any of the elements that sum to $c_j$ for $i<j$).  Otherwise, for $c_j=b_1+...+b_n$ we can take $x_i=\prod_{k<j}c_jb_{n+1-i}$ to finish.
    
    \item We may assume $\sum_{i\in [n]}c_i\in A$ for $c_i$ as above, as otherwise this and Point (2) would ensure that the $c_i$ are the desired elements of $B.$  In particular, we may assume that the sum of any $n^2$ $a_i$ is in $A.$
    
    \item We may assume that the sum of $n^2+n-1$ many $a_i$ is in $B,$ as otherwise letting $x_1$ be the sum of the $n^2$ smallest indexed $a_i$ and $x_{1+i}$ the $n^2+i$'th $a_i$ gives the desired integers by Point (3).
    
    \vspace{1mm}
    
    To finish, consider numbers $b=\sum_{i\in I} a_i$ where $|I|=n+1.$  We case on whether there are infinitely many such numbers in $A$ or in $B.$
    
    \item If there are infinitely many such $b\in A,$ consider index sets $I_2,...,I_{n}$ where $|I_i|=n+1,$ $\max{I_i}<\min{I_j}$ for $i<j,$ and $x_i=\sum_{i\in I_i}a_i\in A.$  Let $x_1=a_k$ where $k>\max{I_{n}}.$ Then $\prod_{i\leq j}x_i\in A$ by the choice of $x_1,$ and $\sum x_i\in A$ by Point (3).
    
    \item If there are infinitely many such $b\in B,$ consider index sets $I_2,...,I_{n}$ where $|I_i|=n+1,$ $\max{I_i}<\min{I_j}$ for $i<j,$ and $x_i=\sum_{i\in I_i}a_i\in B.$  Let $I_1$ be an index set such that $|I_1|=n,$ $\min{I_1}>\max{I_{n}},$ and $x_1=\sum_{i\in I_n}a_i\in B$ by Point (1).  Then $\sum x_i$ is the sum of $(n-1)(n+1)+n=n^2+n-1$ $a_i,$ and so is in $B$ by Point (4).  Therefore, if $x_1\cdot \prod_{2\leq i\leq j}x_i\in B$ for all $j<n$ then we're done, so we may assume that $x_1\cdot \prod_{2\leq i\leq j}x_i\in A$ for at least one $j.$ In this case, suppose that $I_1$ indexes the elements $b_1,...,b_n\in (a_i)_{i\in\mathbb{N}}.$ Let $x'_i=b_i\cdot \prod_{2\leq i\leq j}x_i.$  Then $x'_i\in A$ and $\prod_{i\leq j}x'_i\in A$ for each $j\in [n]$ by the choice of the $(a_i),$ and $\sum x'_i\in A$ by the previous sentence.
    
\end{enumerate}

\end{proof}

\section{Concluding remarks}

While the proof of Theorem \ref{thm:local} can be adapted to deal with some of the cases that arise when considering partitions of $\mathbb{N}$ with more than $2$ colors (for example, when each color class is syndetic), the proof of the thick case seems to be much less versatile.  Finding a more abstract approach to this case seems like a natural first barrier to attempt to overcome when generalizing these results.  In upcoming work with Sabok we make some progress towards this and give a somewhat different approach to the thick case that, in particular, allows us to generalize Theorem \ref{m2} to arbitrarily long arithmetic progressions.  However, this approach still relies on some analysis that does not generalize naturally to more than $2$-colorings.

Finally, we mention that the situation is quite different when considering coloring of $\mathbb{Q}$ rather than $\mathbb{N},$ where the thick case is not nearly as problematic.  In fact, in upcoming work with Sabok we prove (generalizations of) Theorem \ref{M} for any finite coloring of $\mathbb{Q}.$

\end{document}